\title[Upper bounds for the first $\ell^2$-Betti number]{On upper bounds for the first $\ell^2$-Betti number} 
 \author[C. Feldkamp]{Carsten Feldkamp}
 \email{carsten.feldkamp@hhu.de}
 \author[S. Kionke]{Steffen Kionke}
 \address{Fakult\"{a}t f\"{u}r Mathematik und Informatik, FernUniversit\"{a}t in Hagen, 58084 Hagen, Germany}
 \email{steffen.kionke@fernuni-hagen.de}
 \thanks{Funded by the Deutsche Forschungsgemeinschaft (DFG, German Research Foundation) - 441848266}
 \subjclass[2020]{Primary 20F05; Secondary 20F50, 20F69}
\keywords{$\ell^2$-Betti number, Burnside group} 
\date{\today}
\theoremstyle{plain}
\newtheorem{theorem}{Theorem}[section]
\newtheorem{lemma}[theorem]{Lemma}
\newtheorem{corollary}[theorem]{Corollary}
\theoremstyle{definition}
\newtheorem{definition}[theorem]{Definition}
\newtheorem{remark}[theorem]{Remark}
\newtheorem*{remark*}{Remark}
\DeclareMathOperator{\Ch}{Ch}
\DeclareMathOperator{\Cay}{Cay}
\providecommand{\bbN}{\mathbb{N}}
\providecommand{\bbZ}{\mathbb{Z}}
\providecommand{\bbC}{\mathbb{C}}
\renewcommand{\epsilon}{\varepsilon}
\renewcommand{\phi}{\varphi}
\renewcommand{\Re}{\mathrm{Re}}
\begin{document}

\begin{abstract}
This article presents a method for proving upper bounds for the first $\ell^2$-Betti number of groups using only the geometry of the Cayley graph. As an application we prove that Burnside groups of large prime exponent have vanishing first $\ell^2$-Betti number. 

Our approach extends to generalizations of $\ell^2$-Betti numbers, that are defined using characters. We illustrate this flexibility by generalizing results of Thom-Peterson on q-normal subgroups to this setting.
\end{abstract}

\maketitle

Over the last 30 years the $\ell^2$-Betti numbers have become a major tool in the investigation of infinite groups.
The purpose of this article is to explore the first $\ell^2$-Betti number of groups using only the geometry of the Cayley graph. Our method is based on Pichot's observation \cite[Propositon 2]{Pichot06} that the first $\ell^2$-Betti number can be expressed with the \emph{rate of relations} in the Cayley graph. It follows from an elementary identity (see Lemma \ref{lem:projection-estimate}) that explicit cycles in the Cayley graph give rise to upper bounds for the first $\ell^2$-Betti number. Surprisingly, these elementary bounds can be used to prove new results.
\begin{theorem}\label{thm:torsion-bound-intro} Let $p$ be a prime and
let $G$ be a torsion group of exponent~$p$.
Then  $b_1^{(2)}(G) \leq 2p-2$.
\end{theorem}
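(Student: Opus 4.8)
The plan is to produce, for every finite generating set $S=\{s_1,\dots,s_k\}$ of $G$, enough explicit cycles in $\Cay(G,S)$ to feed into Lemma~\ref{lem:projection-estimate}: the latter gives, for finite cycles $c^{(1)},\dots,c^{(m)}\in\ell^2(G)^S$ and $G$ infinite,
\[
b_1^{(2)}(G)\ \le\ |S|-1-\dim_{\mathcal N G}\overline{\,\mathcal N G c^{(1)}+\dots+\mathcal N G c^{(m)}\,},
\]
since the span on the right sits inside the image of the second boundary map of the presentation $2$-complex (equivalently, inside the closure of the finitely supported cycles, whose codimension in the $\ell^2$-cycle space is exactly $b_1^{(2)}(G)$). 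The finite-group case of the theorem is trivial, and an infinitely generated torsion group of exponent $p$ is a directed union of finitely generated exponent-$p$ subgroups, to which the bound then passes by continuity of $\ell^2$-Betti numbers along directed unions (or because Lemma~\ref{lem:projection-estimate} applies verbatim to infinite generating sets). So it suffices to exhibit, for $G$ infinite and finitely generated, cycles generating an $\mathcal N G$-submodule of dimension at least $k-2p+1$.

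Exponent $p$ supplies cycles freely: every word $w$ over $S$ closes up after $p$ repetitions, and the corresponding $1$-chain is, via Fox derivatives,
\[
c_w\ =\ N_{\bar w}\cdot\sum_{s\in S}\tfrac{\partial w}{\partial s}\otimes e_s,\qquad N_{\bar w}=1+\bar w+\dots+\bar w^{\,p-1},
\]
where $\bar w\in G$ is the image of $w$, $e_s$ is the edge at the identity labelled by $s$, and $N_{\bar w}$ is the norm element of $\langle\bar w\rangle$ (of order dividing $p$); in particular $c_w$ is divisible by $N_{\bar w}$. I would fix one generator $s_1$ and use the cycles $c_{s_1^a s_j}$ for $2\le j\le k$ and $0\le a\le p-1$ — the case $a=0$ being the power loop $c_{s_j}=N_{s_j}\otimes e_{s_j}$ — together with $c_{s_1}$. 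Letting $a$ run over all of $\mathbb Z/p$ uses the left coset $\langle s_1\rangle s_j$ in full, which should allow these cycles to cover the $s_j$-coordinate of $\ell^2(G)^S$ up to a small error controlled by $s_1$; running $j$ over $2,\dots,k$ and collecting all the errors into the single coordinate $e_{s_1}$ should leave an obstruction of von Neumann dimension at most $2(p-1)$, which is the claimed bound.

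The main obstacle is exactly this last count. The divisibility of $c_w$ by $N_{\bar w}$ is a double-edged sword: it makes the cycles short and explicit, but it also caps the submodule generated by a single $c_w$ at dimension $\tfrac1p$ (it lies in the range of the trace-$\tfrac1p$ operator $\tfrac1p N_{\bar w}$), so the relators must be handled together rather than one at a time, and one has to control quantitatively how the $\tfrac1p$-dimensional contributions of $c_{s_j},c_{s_1 s_j},\dots,c_{s_1^{p-1}s_j}$ overlap and accumulate. I expect the efficient route is a change of basis on $\ell^2(G)^S$ diagonalising left multiplication by $s_1$, splitting each coordinate into the $p$ isotypic pieces of $\langle s_1\rangle\cong\mathbb Z/p$; in these coordinates the system $c_{s_j},c_{s_1 s_j},\dots,c_{s_1^{p-1}s_j}$ becomes transparently surjective onto the $s_j$-coordinate away from one distinguished isotypic piece, and all the residual defects land in just two pieces attached to $s_1$, whence $2p-2=2(\dim_{\mathbb C}\mathbb C[\mathbb Z/p]-1)$. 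I would not expect $2p-2$ to be sharp, only to be what this elementary argument delivers.
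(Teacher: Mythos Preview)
Your reduction to $G$ infinite and finitely generated matches the paper, and the idea of feeding cycles from $w^p=1$ into the Pichot formula is right, but the core step---showing that your $(k-1)p+1$ cycles span an $\mathcal N G$-submodule of dimension at least $k-2p+1$---is not carried out, and the proposed mechanism does not obviously work. Diagonalising left multiplication by $s_1$ is a reasonable move, but the norm element in $c_{s_1^a s_j}$ is $N_{s_1^a s_j}$, the projection onto $\langle s_1^a s_j\rangle$-invariants; these projections for different $a$ neither commute with one another nor respect the $\langle s_1\rangle$-isotypic decomposition in any evident way. There is no transparent reason the $p$ cycles for fixed $j$ should fill the $s_j$-coordinate ``away from one distinguished isotypic piece'', nor that all residuals funnel into two pieces at $s_1$. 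You also never invoke minimality of $S$; without it $s_j\in\langle s_1\rangle$ is possible and the whole $j$-family degenerates.

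The paper avoids $\mathcal N G$-dimension bookkeeping altogether. Take $S$ \emph{minimal} with $|S|=N$; then for distinct $a,b,c\in S$ one has $\langle ab\rangle\cap\langle ac\rangle=\{1\}$ (otherwise $c\in\langle a,b\rangle$, contradicting minimality). Hence for fixed $a$ the $N-1$ length-$2p$ cycles coming from $(ab)^p$, $b\ne a$, share \emph{only} the edge $\bar a$ in $\Cay(G,S)$. Their sum $z_a$ therefore satisfies
\[
\frac{\langle z_a,\bar a\rangle^2}{\langle z_a,z_a\rangle}=\frac{(N-1)^2}{(N-1)^2+(N-1)(2p-1)}=\frac{N-1}{N+2p-2},
\]
and plugging these $N$ ratios into Lemma~\ref{lem:main-lemma} gives
\[
b_1^{(2)}(G)\ \le\ N-1-\frac{N(N-1)}{N+2p-2}\ =\ \frac{(2p-2)(N-1)}{N+2p-2}\ \le\ 2p-2.
\]
The point is that minimality buys \emph{geometric} near-disjointness of the relator cycles, so the inner products are computable by inspection and no overlap analysis of noncommuting projections is needed.
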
  
Using a theorem of Gaboriau this implies a vanishing result for the first $\ell^2$-Betti number of Burnside groups $B(m,p)$ of exponent $p$.
\begin{corollary}\label{corB(3,100)}
Let $p$ be a prime number. If $p$ is sufficiently large, then $b_1^{(2)}(B(m,p)) = 0$.
\end{corollary}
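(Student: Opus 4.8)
The plan is to upgrade the finite bound of Theorem~\ref{thm:torsion-bound-intro} to a vanishing statement by combining it with the Novikov--Adian solution of the Burnside problem and a theorem of Gaboriau on normal subgroups. First I would dispose of the cases $m\leq 1$: the group $B(0,p)$ is trivial and $B(1,p)\cong\bbZ/p\bbZ$ is finite, so $b_1^{(2)}(B(m,p))=0$ in these cases; thus assume $m\geq 2$. By the Novikov--Adian theorem there is a constant $p_0$ such that $B(m,p)$ is infinite for all $m\geq 2$ whenever $p\geq p_0$, and this is the only place where the hypothesis that $p$ is large enters. Next I would record that every subgroup and every quotient of $B(m,p)$ again has exponent $p$, so Theorem~\ref{thm:torsion-bound-intro} bounds the first $\ell^2$-Betti number of each of them by $2p-2$; in particular all these numbers are finite.

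The $\ell^2$-homological input is Gaboriau's theorem that a group $G$ admitting an infinite normal subgroup $N$ of infinite index with $b_1^{(2)}(N)<\infty$ has $b_1^{(2)}(G)=0$. Granting that $B(m,p)$ has a normal subgroup $N$ with both $N$ and $B(m,p)/N$ infinite, the corollary follows: $b_1^{(2)}(N)\leq 2p-2<\infty$ by the first step, $[B(m,p):N]=\infty$, and Gaboriau's theorem applies. I should stress that the elementary special case of this theorem for a \emph{finite-index} normal subgroup, namely the multiplicativity $b_1^{(2)}(N)=[G:N]\,b_1^{(2)}(G)$, does not suffice on its own: by Zelmanov's solution of the restricted Burnside problem $B(m,p)$ has a largest finite quotient $B_0(m,p)$, so its finite-index subgroups have index at most $|B_0(m,p)|$, and that route gives only $b_1^{(2)}(B(m,p))\leq (2p-2)/|B_0(m,p)|$.

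The crux --- and the main obstacle --- is therefore to produce such an $N$, i.e.\ to show that the infinite group $B(m,p)$ is not just infinite and has no nontrivial finite normal subgroup. For $m\geq 3$ I would take $N=\ker\big(B(m,p)\twoheadrightarrow B(2,p)\big)$; here $B(2,p)$ is infinite, and $N\neq 1$ because the abelianizations $(\bbZ/p\bbZ)^m$ and $(\bbZ/p\bbZ)^2$ are not isomorphic. For $m=2$ I would instead appeal to Ol'shanskii's work on quotients of free Burnside groups, which produces infinite proper quotients of $B(2,p)$ for large~$p$. In all cases $B(m,p)/N$ is infinite and $N\neq 1$, and $N$ is then automatically infinite: a finite normal subgroup $F\neq 1$ would be centralised by the finite-index subgroup $C_{B(m,p)}(F)$, which for any choice of $1\neq f\in F$ lies inside $C_{B(m,p)}(f)$; but by Adian's theorem the latter is cyclic of order $p$, and a finite subgroup cannot have finite index in an infinite group. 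Everything outside this structural step is formal, so the real work is in these facts about the algebraic structure of free Burnside groups.
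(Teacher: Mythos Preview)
Your overall architecture --- Theorem~\ref{thm:torsion-bound-intro} plus Gaboriau's theorem, reducing the problem to exhibiting an infinite normal subgroup of infinite index in $B(m,p)$ --- is exactly the paper's strategy (this is the content of the paper's Theorem~\ref{thm:vanishing-torsion}). The difference lies only in the structural input about Burnside groups used to produce~$N$. The paper handles all $m\geq 2$ uniformly via Ivanov's theorem that $B(m,p)$ contains a $Q$-subgroup $H\cong B(\infty,p)$: a projection $B(\infty,p)\twoheadrightarrow B(\infty,p)$ with infinite kernel $K$ gives, upon taking the normal closure of $K$ in $B(m,p)$, the desired $N$ (the $Q$-subgroup property guarantees $\langle K\rangle^{B(m,p)}\cap H=K$, so the index is infinite). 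Your route instead splits cases: for $m\geq 3$ you take $N=\ker\bigl(B(m,p)\to B(2,p)\bigr)$ and use Adian's centralizer theorem to exclude finite nontrivial normal subgroups; for $m=2$ you appeal to Ol'shanskii's infinite proper quotients. The paper in fact records your $m\geq 3$ argument as an alternative in Remark~\ref{remB(3,100)}(2), though with a more direct reason that $N$ is infinite: since $\langle x_1,x_2\rangle\subseteq N\langle x_2\rangle$ with $\langle x_1,x_2\rangle\cong B(2,p)$ infinite and $\langle x_2\rangle$ finite, $N$ must be infinite --- this avoids the centralizer detour. Ivanov's $Q$-subgroup theorem buys a uniform argument for all $m\geq 2$; your version trades that single heavy citation for two separate deep inputs and a case split. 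One small inaccuracy: the largeness of $p$ is not used \emph{only} for Novikov--Adian infiniteness --- Adian's centralizer result and Ol'shanskii's quotient construction each require their own lower bound on $p$.
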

On the other hand, suppose that $b_1^{(2)}(B(m,p)) \neq 0$ for some prime $p$. Then Theorem \ref{thm:torsion-bound-intro} offers a simple solution of the restricted Burnside problem for $m$-generated $p$-groups using the multiplication formula for $\ell^2$-Betti numbers of finite index subgroups.

\medskip 

Our method can neatly be adapted to character-theoretic generalizations of the first $\ell^{2}$-Betti number.   We recall that every character $\psi$ (see \cite[Def.~2.5]{Kio2018})  of the group $G$, gives rise to a $\psi$-Betti number $b_1^\psi(G)$; see \cite{Kio2018} or Section~\ref{sec:bettinumbers}. The ordinary Betti numbers and the $\ell^2$-Betti numbers are special cases of this construction. However, it is difficult to calculate or bound $\psi$-Betti numbers under general assumptions of $\psi$.

 We extend Pichot's observation to the general setting and we use our method to generalize a result of Thom-Peterson \cite[Theorem 5.6]{ThomPeterson} to $\psi$-Betti numbers; see Corollary \ref{cor:q-psi-normal}.
 Even for $\ell^2$-Betti numbers our argument contains a new proof of their result. This provides a convenient way to bound (and sometimes calculate) $\psi$-Betti numbers in some generality. We illustrate this by proving a vanishing result for certain $\psi$-Betti numbers of right-angled groups; see Theorem \ref{thm:right-angled-vanish}.
 
\medskip
 
In Section~\ref{sec:bettinumbers} we discuss basic results on $\psi$-Betti numbers and we introduce our main method. In Section~\ref{sec:torsion} we apply it in the case of $p$-torsion groups. Section~\ref{sec:q-normal} is concerned with q-normality and presents applications to $\psi$-Betti numbers.  
 
\section{Betti numbers and the Cayley graph} \label{sec:bettinumbers}
The following simple result is essential for our approach.
\begin{lemma}\label{lem:projection-estimate}
	Let $\mathcal{H}$ be a Hilbert space and let $W \subseteq \mathcal{H}$ be a subspace.
	Let $P\colon \mathcal{H} \to \overline{W}$ denote the orthogonal projection onto the closure of $W$.
Then for all $v \in \mathcal{H}$
\[
	\langle Pv, v \rangle = \sup_{w \in W} \frac{|\langle w,v \rangle|^2}{\langle w, w \rangle}
\]
where the supremum is taken over all non-zero elements of $W$ (and is defined to be $0$ if $W = 0$).
\end{lemma}
\begin{proof}
For $v = 0$ the assertion is obvious. We may assume that $\Vert v \Vert = 1$.
For all $w \in W$, we note that
\begin{align*}
 |\langle w , v \rangle|^2  = 
|\langle Pw , v \rangle|^2 = |\langle w , Pv \rangle|^2 \stackrel{C.S.}{\leq} \Vert w \Vert^2 \Vert Pv \Vert^2 
= \langle Pv, v \rangle
 \Vert w \Vert^2.
\end{align*}
If $w \neq 0$ we obtain
\[
	\frac{|\langle w , v \rangle|^2}{\langle w, w \rangle} \leq \langle Pv, v \rangle.
\]
In particular, the proof is complete if $Pv =0$. 

For the converse we assume $Pv \neq 0$. Let $\epsilon \in (0,1)$. Since $W$ is dense in $\overline{W}$, there is $w \in W$ with
$\Vert Pv-w \Vert < \epsilon \Vert Pv \Vert^2$ and we deduce
\[
	|\langle w, v \rangle| \geq  \langle Pv, v \rangle - |\langle w- Pv, v \rangle| \stackrel{C.S.}{\geq}\langle Pv, v \rangle - \Vert w- Pv \Vert  \geq (1-\epsilon) \langle Pv, v \rangle.
\]
In addition, we note that $\Vert w \Vert = \Vert w -Pv + Pv \Vert \leq (1+\epsilon) \Vert Pv \Vert$ and so
\[
\frac{|\langle w , v \rangle|^2}{\langle w, w \rangle} \geq \frac{(1-\epsilon)^2}{(1+\epsilon)^2} \langle Pv, v \rangle
\]
The assertion follows as $\epsilon$ can be arbitrarily close to $0$.
\end{proof}

Let $G$ be a group. A \emph{character} of $G$ is a function $\psi\colon G \to \bbC$ of positive type, which is constant on conjugacy classes of $G$ and satisfies $\psi(1_G) = 1$; see \cite[Def.~2.5]{Kio2018}. Let $\Ch(G)$ denote the space of all characters of $G$. Every character $\psi \in  \Ch(G)$ gives rise to a semi-definite $G$-invariant inner product $\langle g, h \rangle_\psi = \psi(h^{-1}g)$ on the group ring $\bbC[G]$. Passing to the completion provides us with a tracial Hilbert $G$-bimodule $\ell^\psi(G)$; see \cite[Def.~2.1]{Kio2018}. Using the GNS construction, this provides a tracial von Neumann algebra and a notion of dimension, which can be used to define the $\psi$-Betti numbers $b_k^\psi(G)$ of $G$, provided that $G$ satisfies suitable finiteness properties. Specifically $b_0^\psi$ is defined for all groups and $b_1^\psi$ is defined for all finitely generated groups.

For the regular character $\delta$ with $\delta(g) = 0$ for all $g \neq 1_G$, one has $\ell^\delta(G) = \ell^2(G)$ and one obtains the famous $\ell^2$-Betti numbers $b_k^{(2)}(G)$. The constant character (i.e., $\psi(g) = 1$ for all $g$) gives rise to the ordinary rational Betti numbers of $G$ since $\ell^\psi(G) \cong \bbC$.

\begin{definition}
Let $G$ be a group and let $\psi \in \Ch(G)$. A subgroup $K \leq G$ is \emph{$\psi$-regular}, if $\psi|_K$ is the regular character on $K$, i.e.~$\psi(k) = 0$ for all $k \in K \backslash \{1\}$. 
\end{definition}

Here we are mainly interested in the first Betti numbers $b_1^\psi(G)$.
It will however be useful and instructive to initially consider the $0$-th Betti number. 
Let $J_G$ denote the augmentation ideal in $\bbC[G]$, i.e.~the set of elements $w = \sum_{g\in G} w_g g$ which satisfy $\sum_{g \in G} w_g = 0$. 
\begin{lemma}\label{lem:b0}
Let $G$ be a group and let $\psi \in \Ch(G)$ be a character.
\begin{enumerate}[ (a)]
\item\label{it:b0-formula} $b_0^\psi(G) = 1-  \sup_{w \in J_G} \frac{\langle w, 1 \rangle_\psi^2}{\langle w, w \rangle_\psi}$
where the supremum is taken over all non-zero elements of $J_G$.

\item\label{it:b0-lim} If $G = \bigcup_{i\in I} G_i$ is a directed union of subgroups $G_i$, then 
$\lim_{i \in I} b^\psi_0(G_i) = b_0^\psi(G)$.

\item\label{it:b0-regular} $b_0^\psi(G) \leq \frac{1}{|K|}$ 
for every $\psi$-regular
subgroup $K \leq G$.
\end{enumerate}
\end{lemma}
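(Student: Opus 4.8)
The plan is to derive all three statements from Lemma~\ref{lem:projection-estimate} applied inside the Hilbert space $\ell^\psi(G)$, once $b_0^\psi$ is rewritten as a von Neumann dimension. For part~(a): let $\mathcal{M}_\psi$ be the tracial von Neumann algebra generated by the left $G$-action on $\mathcal{H} := \ell^\psi(G)$, with trace $\tau_\psi(x) = \langle x\cdot 1_G, 1_G \rangle_\psi$ and cyclic vector $1_G$. By definition (see \cite{Kio2018}), $b_0^\psi(G) = \dim_{\mathcal{M}_\psi} H_0(G; \mathcal{H})$, and by the standard properties of the dimension function this equals $\dim_{\mathcal{M}_\psi}\bigl(\mathcal{H}/\overline{J_G\mathcal{H}}\bigr)$, which is isometric, as a right $\mathcal{M}_\psi$-module, to the orthogonal complement of $\overline{J_G\mathcal{H}}$. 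Since $\bbC[G]\cdot 1_G$ is dense in $\mathcal{H}$ and $J_G$ is a two-sided ideal of $\bbC[G]$, one has $\overline{J_G\mathcal{H}} = \overline{W}$ with $W := J_G \cdot 1_G$; this closed subspace is invariant under the right $\mathcal{M}_\psi$-action, so the orthogonal projections $P$ onto $\overline{W}$ and $Q = \id - P$ onto $\overline{W}^\perp$ both lie in $\mathcal{M}_\psi$. Hence $b_0^\psi(G) = \tau_\psi(Q) = 1 - \langle P\cdot 1_G, 1_G\rangle_\psi$, and Lemma~\ref{lem:projection-estimate} applied to $W$ and $v = 1_G$ gives $\langle P\cdot 1_G, 1_G\rangle_\psi = \sup |\langle w,1\rangle_\psi|^2/\langle w,w\rangle_\psi$ over $w \in J_G$, using $\langle w\cdot 1_G, h\cdot 1_G\rangle_\psi = \langle w, h\rangle_\psi$ on $\bbC[G]$. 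Finally, rotating $w$ by a unit scalar makes $\langle w, 1\rangle_\psi$ real and non-negative without changing $\langle w,w\rangle_\psi$, so the supremum is exactly the one stated (elements $w$ with $\langle w,w\rangle_\psi = 0$ satisfy $\langle w,1\rangle_\psi = 0$ by Cauchy--Schwarz for the semi-definite form, and are harmless). The only delicate point here is matching the definition of $b_0^\psi$ from \cite{Kio2018} with this trace formula, i.e.\ that the von Neumann dimension of a closed submodule of the standard module is the $\tau_\psi$-trace of the associated projection and is insensitive to replacing $J_G\mathcal{H}$ by its closure; the rest of the argument is routine.

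Part~(b) is then immediate from~(a). For $G_i \subseteq G_j$ one has $J_{G_i} \subseteq J_{G_j}$, the restriction of $\langle\cdot,\cdot\rangle_\psi$ to $\bbC[G_i]$ equals $\langle\cdot,\cdot\rangle_{\psi|_{G_i}}$, and $\bigcup_i J_{G_i} = J_G$ because every element of $J_G$ is finitely supported and the union is directed. Therefore the supremum in~(a) taken over $J_{G_i}$ forms a monotone net in $i$ whose limit is the supremum over $J_G$, and so the net $b_0^\psi(G_i)$ is monotone with limit $b_0^\psi(G)$.

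Part~(c): we may assume $|K| \geq 2$, since otherwise $1/|K| = 1 \geq b_0^\psi(G)$ trivially. Pick a finite subset $S \subseteq K$ with $1_G \in S$, set $n = |S| \geq 2$, and let $w_S = 1 - \tfrac{1}{n}\sum_{s \in S} s \in J_G$. Because $S \subseteq K$ and $\psi$ is regular on $K$, we have $\psi(s^{-1}s') = \delta_{s,s'}$ for all $s, s' \in S$, and a short computation then gives $\langle w_S, 1\rangle_\psi = \langle w_S, w_S\rangle_\psi = 1 - \tfrac{1}{n}$. Substituting into the formula from~(a) yields $b_0^\psi(G) \leq 1 - (1 - \tfrac{1}{n}) = \tfrac{1}{n}$. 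Taking $n = |K|$ when $K$ is finite, and letting $n \to \infty$ when $K$ is infinite, we conclude $b_0^\psi(G) \leq 1/|K|$.

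I expect part~(a) to be the main obstacle, precisely in the book-keeping of the preceding paragraph: identifying $\overline{J_G\mathcal{H}}$ with $\overline{W}$, verifying that the relevant projection lies in $\mathcal{M}_\psi$, and translating the von Neumann dimension into the trace $\langle P\cdot 1_G, 1_G\rangle_\psi$ so that Lemma~\ref{lem:projection-estimate} can be applied. Parts~(b) and~(c) are then short formal consequences of the formula in~(a).
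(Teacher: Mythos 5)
Your proof is correct and follows essentially the same route as the paper: part (a) is the identity $b_0^\psi(G)=1-\langle P1,1\rangle_\psi$ combined with Lemma~\ref{lem:projection-estimate}, part (b) uses $J_G=\bigcup_i J_{G_i}$, and your test element $w_S$ in part (c) is just a rescaling of the paper's $w=|T|\cdot 1_G-\sum_{k\in T}k$ with $T=S\setminus\{1\}$. The only cosmetic difference is that the paper obtains the dimension formula by tensoring the free resolution $\bbC[G]^S\to\bbC[G]\to\bbC$ with $\ell^\psi(G)$ rather than via your coinvariants/projection-in-$\mathcal{M}_\psi$ bookkeeping, which amounts to the same thing.
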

\begin{remark}
It is well-known that $b_0^{(2)}(G) = \frac{1}{|G|}$; see \cite[Thm.~1.35 (8)]{Luck}.
\end{remark}
\begin{proof}
Let $S$ be a generating set for $G$. 
We consider the initial segment of the associated free resolution of $\bbC$:
\[
     \bbC[G]^{S} \stackrel{\partial_1}{\longrightarrow} \bbC[G] \longrightarrow \bbC.
\]
The image of $\partial_1$ is the augmentation ideal.
We take the tensor product with $\ell^\psi(G)$ and deduce that
\[
	b_0^\psi(G) = 1 - \dim_\psi(\overline{J_G}).
\]
where $\overline{J_G}$ denotes the closure of the image of the augmentation ideal in $\ell^\psi(G)$.
Let $P \colon \ell^\psi(G) \to \overline{J_G}$ denote the orthogonal projection. By definition
\[
	\dim_\psi(\overline{J_G}) = \langle P(1), 1 \rangle_\psi
\]
and assertion \eqref{it:b0-formula} follows from Lemma \ref{lem:projection-estimate}. Let $G = \bigcup G_i$ be a direct union of subgroups, then $J_G = \bigcup J_{G_i}$ and \eqref{it:b0-lim} follows immediately from \eqref{it:b0-formula}.

Let $K \leq G$ be a $\psi$-regular subgroup. Let $T \subseteq K \setminus \{1\}$ be a finite subset. Then
\[
	w = |T|\cdot 1_G - \sum_{k \in T} k \in J_G.
\]
 Since $K$ is $\psi$-regular, the elements of $K$ are orthonormal and we deduce
\[
	\frac{|\langle w ,1 \rangle_\psi|^2}{\langle w, w \rangle_\psi} = \frac{|T|^2}{|T|^2 + |T|} = \frac{|T|}{|T|+1}.
\]
Now \eqref{it:b0-formula} implies $b_0^\psi(K) \leq 1 - \frac{|T|}{|T|+1} = \frac{1}{|T|+1}$.
Statement \eqref{it:b0-regular} follows by taking $T = K \setminus \{1\}$ if $K$ is finite respectively letting $|T|$ tend to $\infty$ otherwise.
\end{proof}

We would like to apply the same ideas to the first $\psi$-Betti number~$b_1^\psi(G)$. However, up to now we only have a definition of $b_1^\psi(G)$ for all finitely generated groups $G$. We also require a definition for groups which are not finitely generated. This could be done using  L\"uck's generalized dimension function (discussed in \cite[\S6.1, 6.2]{Luck}), but this is not convenient for our purposes and for simplicity we work with the following variation.

\begin{definition}
Let $G$ be a group and let $\psi \in \Ch(G)$. Then
\[
	\bar{b}_1^\psi(G) := \liminf_{H \leq G} b_1^\psi(H)
\]
where the limit is taken over the directed system of all finitely generated subgroups $H \leq G$.
\end{definition}
\begin{remark}\label{rem:bar}
For a finitely generated group $\bar{b}_1^\psi(G) = b_1^\psi(G)$.
In general however,
$\bar{b}_1^\psi(G)$ can be strictly larger than the properly defined value of the first $\psi$-Betti number. It is easy to see this for the ordinary Betti numbers. For instance, it follows from the methods developed in \cite{Howie82} that $\langle (x_i)_{i \in \bbZ} \mid x_ix_{i+1}x_i^{-1} = x_{i+1} ^2\rangle$, is a perfect and locally indicable group, i.e., the ordinary rational Betti number of every finitely generated subgroup is $\geq 1$.

For the classical $\ell^2$-Betti number the inequality $b_1^{(2)}(G) \leq \bar{b}_1^{(2)}(G)$ follows from the argument given in the proof of \cite[Theorem 7.2 (3)]{Luck}.
\end{remark}
For later reference we state the following observation.
\begin{lemma}\label{lem:liminf}
Let $G$ be a group and let $\psi \in \Ch(G)$.
If $G = \bigcup_{i\in I} G_i$ is a directed union of subgroups $G_i$, then
\[
	\bar{b}_1^\psi(G) \leq \liminf_{i\in I} \bar{b}_1^\psi(G_i)
\]
\end{lemma}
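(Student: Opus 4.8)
The plan is to unwind both occurrences of $\bar{b}_1^\psi$ as liminfs over directed systems of finitely generated subgroups and then chase a few inequalities. Throughout I write $\mathcal F(A)$ for the set of finitely generated subgroups of a group $A$, directed by inclusion; recall that for a net $(a_j)_{j\in J}$ over a directed set $J$ one has $\liminf_j a_j=\sup_{j\in J}\inf_{k\geq j}a_k$, and that for $H\in\mathcal F(A)$ the value $b_1^\psi(H)=b_1^{\psi|_H}(H)$ depends only on the pair $(H,\psi|_H)$ and not on the ambient group. In particular $\bar{b}_1^\psi(G)=\sup_{H_0\in\mathcal F(G)}\inf_{H_0\leq H\in\mathcal F(G)}b_1^\psi(H)$, and likewise for each $G_i$.

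The one structural ingredient is the standard fact that, since $G=\bigcup_{i\in I}G_i$ is a directed union, every $H_0\in\mathcal F(G)$ already lies in some $G_i$: each of the finitely many generators of $H_0$ lies in some $G_i$, and directedness of $I$ produces a single index $i$ whose group contains all of them. Consequently $\mathcal F(G)=\bigcup_{i\in I}\mathcal F(G_i)$, each $\mathcal F(G_i)$ is a subset of $\mathcal F(G)$, and the function $H\mapsto b_1^\psi(H)$ on $\mathcal F(G)$ restricts on each $\mathcal F(G_i)$ to precisely the function whose liminf defines $\bar{b}_1^\psi(G_i)$.

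Now I would argue as follows. Put $L:=\liminf_{i\in I}\bar{b}_1^\psi(G_i)$. By the displayed formula for $\bar{b}_1^\psi(G)$, it suffices to produce, for every $H_0\in\mathcal F(G)$ and every $\epsilon>0$, some $H\in\mathcal F(G)$ with $H_0\leq H$ and $b_1^\psi(H)<L+\epsilon$. Fix $H_0$ and $\epsilon$, and choose $i_0$ with $H_0\leq G_{i_0}$. Since $\inf_{i\geq i_0}\bar{b}_1^\psi(G_i)\leq L<L+\epsilon$ by definition of the liminf over $I$, pick $i\geq i_0$ with $\bar{b}_1^\psi(G_i)<L+\epsilon$. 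Then $H_0\leq G_{i_0}\leq G_i$, so $H_0\in\mathcal F(G_i)$, and since $\inf_{H_0\leq H\in\mathcal F(G_i)}b_1^\psi(H)$ is the term indexed by $H_0$ in the supremum defining $\bar{b}_1^\psi(G_i)$, it is bounded above by $\bar{b}_1^\psi(G_i)<L+\epsilon$. Hence there is $H\in\mathcal F(G_i)\subseteq\mathcal F(G)$ with $H_0\leq H$ and $b_1^\psi(H)<L+\epsilon$, which is what we wanted.

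I do not expect any genuine obstacle: the whole argument is order-theoretic bookkeeping with nested liminfs. The only place that needs a moment's care is keeping the two directed systems $\mathcal F(G)$ and $\mathcal F(G_i)$ notationally apart while using that they carry the same function $H\mapsto b_1^\psi(H)$ --- that is exactly the point where one invokes that $b_1^\psi$ of a finitely generated group is intrinsic to the group.
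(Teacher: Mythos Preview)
Your proof is correct and takes essentially the same approach as the paper: both hinge on the fact that every finitely generated subgroup of $G$ lies in some $G_i$, and then compare the nested liminfs via $\mathcal{F}(G_i)\subseteq\mathcal{F}(G)$. The paper's version is terser and argues from the other side (choosing $H_0$ with $b_1^\psi(H)\geq\bar{b}_1^\psi(G)-\epsilon$ for all finitely generated $H\supseteq H_0$, then locating $H_0$ in some $G_i$ and concluding $\bar{b}_1^\psi(G_j)\geq\bar{b}_1^\psi(G)-\epsilon$ for all $j\geq i$), but the content is identical and your write-up simply spells out the order-theoretic bookkeeping more explicitly.
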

\begin{proof}
Let $\epsilon > 0$. There is a finitely generated subgroup $H_0 \leq G$ such that $b_1^\psi(H) \geq \bar{b}_1^\psi(G) - \epsilon$ for all finitely generated subgroups $H$ that contain $H_0$. Since $H_0$ is finitely generated, there is $i \in I$ such that $H_0 \subseteq G_i$.  Thus for all $j \geq i$ we have 
$\bar{b}_1^{\psi}(G_j) \geq \bar{b}_1^\psi(G)-\epsilon$.
\end{proof}

Assume that $G$ is finitely generated and that $S$ is a finite generating set.
The Cayley graph $\Cay(G,S)$ is the directed graph with vertex set $G$ and edges 
\[
	E_{G,S} = \{ (g,gs) \mid g \in G, s \in S\}.
\]
The edge $(1_G, s)$ will be denoted by $\bar{s}$.
The Cayley graph is equipped with a left action of $G$.
Let $\bbC[E_{G,S}]$ be the vector space with basis $E_{G,S}$ and let $\partial\colon \bbC[E_{G,S}] \to \bbC[G]$ denote the boundary map.
A \emph{finite cycle} in $\Cay(G,S)$ is an element $z \in \bbC[E_{G,S}]$ with $\partial(z) = 0$. Let $Z_{G,S}$ denote the space of finite cycles.
If $\psi\in \Ch(G)$ is a character, then the semi-definite inner product $\langle \cdot, \cdot\rangle_\psi$ extends to a $G$-invariant semi-definite inner product on $\bbC[E_{G,S}]$ such that the edges $\{\bar{s} \mid s \in S\}$ are orthonormal; this means
\[
	\langle (g,gs), (h,ht) \rangle_\psi = \begin{cases} 0 & \text{ if } s \neq t\\
	\psi(h^{-1}g) & \text{ if } s = t\end{cases}
\]
 The following extends \cite[Prop.~2]{Pichot06} and is our main tool.

\begin{lemma}\label{lem:main-lemma}
Let $G$ be a group and let $S$ be a finite generating set. Let $\psi \in \Ch(G)$ be a character.
Then
\[
	b_1^\psi(G) = |S| - 1 + b_0^{\psi}(G) - \sum_{s \in S} \sup_{z \in Z_{G,S}} \frac{|\langle z, \bar{s} \rangle_\psi|^2}{\langle z, z \rangle_\psi}
\]
where the suprema are taken over all non-zero elements of $Z_{G,S}$.
\end{lemma}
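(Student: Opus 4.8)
The plan is to turn the asserted equality into a purely dimension-theoretic identity for the first homology of a free resolution of the trivial module $\bbC$ over $\bbC[G]$, using the Cayley graph to make the first two differentials explicit. Fix the finite generating set $S$ and start from the initial segment
\[
	\bbC[G]^{(R)} \xrightarrow{\ \partial_2\ } \bbC[G]^{S} \xrightarrow{\ \partial_1\ } \bbC[G] \xrightarrow{\ \varepsilon\ } \bbC \longrightarrow 0
\]
of a free resolution: here $\varepsilon$ is the augmentation, $\partial_1$ is given on the standard basis by $\partial_1(e_s) = s - 1_G$ (Fox calculus), the index set $R$ is a (possibly infinite) set of relations, and $\partial_2$ is chosen so as to surject onto $\ker\partial_1$. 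Under the identification $\bbC[G]^S \cong \bbC[E_{G,S}]$ sending $e_s$ to $\bar s$, the map $\partial_1$ is exactly the boundary map $\partial$ of the Cayley graph; hence $\operatorname{im}\partial_1 = J_G$ and $\ker\partial_1 = Z_{G,S}$, and by the choice of $\partial_2$ we have $\operatorname{im}\partial_2 = Z_{G,S}$.

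Next I would apply $\mathcal N_\psi(G)\otimes_{\bbC[G]}(-)$, where $\mathcal N_\psi(G)$ denotes the tracial von Neumann algebra associated with $\psi$ via the GNS construction, so that $\ell^\psi(G)$ is its standard Hilbert module and $\dim_\psi \ell^\psi(G) = \psi(1_G) = 1$. By definition (see \cite{Kio2018}), $b_1^\psi(G)$ is the L\"uck dimension of the first homology of the resulting complex of $\mathcal N_\psi(G)$-modules, and additivity of $\dim_\psi$ (\cite[\S6]{Luck}, carried over to $\mathcal N_\psi(G)$ in \cite{Kio2018}) gives $b_1^\psi(G) = \dim_\psi(\ker\partial_1) - \dim_\psi(\operatorname{im}\partial_2)$. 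Since the dimension of a submodule of a finitely generated Hilbert module agrees with the dimension of its closure inside $\ell^\psi(G)^S$, and since $Z_{G,S}$, the submodule $\operatorname{im}\partial_2 = \mathcal N_\psi(G)\cdot Z_{G,S}$, and its closed $G$-invariant (hence $\mathcal N_\psi(G)$-invariant) hull $\overline{Z_{G,S}}$ all have the same closure in $\ell^\psi(G)^S$, one gets $\dim_\psi(\operatorname{im}\partial_2) = \dim_\psi(\overline{Z_{G,S}})$. Similarly $\dim_\psi(\operatorname{im}\partial_1) = \dim_\psi(\overline{J_G}) = 1 - b_0^\psi(G)$ by the argument in the proof of Lemma~\ref{lem:b0}, so additivity on $0 \to \ker\partial_1 \to \mathcal N_\psi(G)^S \to \operatorname{im}\partial_1 \to 0$ yields $\dim_\psi(\ker\partial_1) = |S| - 1 + b_0^\psi(G)$. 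Combining these, $b_1^\psi(G) = |S| - 1 + b_0^\psi(G) - \dim_\psi\big(\overline{Z_{G,S}}\big)$.

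It then remains to rewrite $\dim_\psi(\overline{Z_{G,S}})$ as the sum of suprema. Let $P$ be the orthogonal projection of $\ell^\psi(G)^S$ onto the closed, $G$-invariant subspace $\overline{Z_{G,S}}$. Because $\{\bar s : s \in S\}$ is an orthonormal family of $G$-generators of $\ell^\psi(G)^S$, the standard trace formula for the von Neumann dimension gives $\dim_\psi(\overline{Z_{G,S}}) = \sum_{s\in S}\langle P\bar s,\bar s\rangle_\psi$. Applying Lemma~\ref{lem:projection-estimate} with $W = Z_{G,S}$ and $v = \bar s$ converts each summand into $\sup_{z\in Z_{G,S}} |\langle z,\bar s\rangle_\psi|^2/\langle z,z\rangle_\psi$, which is exactly the required expression.

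The step I expect to be most delicate is the passage to $\mathcal N_\psi(G)$-coefficients in the second paragraph. Since $G$ is only assumed finitely generated, it need not be finitely presented, so $R$ may be infinite and one cannot reduce to a $G$-cocompact presentation $2$-complex with a bounded operator $\partial_2$ and a genuine chain complex of finitely generated Hilbert modules. For this reason the whole argument is run through L\"uck's extended dimension function for arbitrary modules over the tracial algebra $\mathcal N_\psi(G)$, for which additivity and the identity $\dim_\psi(V) = \dim_\psi(\overline V)$ (for $V$ a submodule of a finitely generated Hilbert module) remain valid; once these properties are in place, the computations in the last two paragraphs are formal. Identifying $\partial_1$ with the Cayley-graph boundary map and invoking the trace formula for $\dim_\psi$ are routine. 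For the regular character $\delta$ this recovers, with a new proof, Pichot's identity \cite[Prop.~2]{Pichot06}.
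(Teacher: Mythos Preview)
Your argument is correct and follows essentially the same route as the paper: pass to the free resolution attached to $S$, identify $\partial_1$ with the Cayley-graph boundary so that $\operatorname{im}\partial_2$ has closure $\overline{Z_{G,S}}$, deduce $b_1^\psi(G)=|S|-1+b_0^\psi(G)-\dim_\psi(\overline{Z_{G,S}})$, and finish with the trace formula and Lemma~\ref{lem:projection-estimate}. The only cosmetic difference is that the paper tensors directly with $\ell^\psi(G)$ and is terser about the dimension bookkeeping, whereas you route everything through $\mathcal N_\psi(G)$ and L\"uck's extended dimension to be explicit about the possibly infinite $R$; this extra care is appropriate but does not change the proof.
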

\begin{proof}
The finite generating set $S$ provides us with a presentation $G \cong F/R$ of $G$ where $F$ is the free group over $S$ and $R$ is the subgroup of relations.
We consider the initial segment of the associated free resolution of $\bbC$:
\[
	\bbC[G]^{R}\stackrel{\partial_2}{\longrightarrow} \bbC[G]^{S} \stackrel{\partial_1}{\longrightarrow} \bbC[G] \longrightarrow \bbC.
\]
Tensoring with $\ell^\psi(G)$ gives
\[
	\ell^\psi(G)^{R}\stackrel{\partial_2}{\longrightarrow} \ell^\psi(G)^{S} \stackrel{\partial_1}{\longrightarrow} \ell^\psi(G).
\]
The middle term is naturally isomorphic to the completion of $\bbC[E_{G,S}]$ with respect to $\langle \cdot, \cdot \rangle_\psi$. The image of $\partial_2$ is the closure of $Z_{G,S}$. The $\psi$-dimension of the closure of the image of $\partial_1$ is $1 - b_0^\psi(G)$. We deduce that
\[
	b_1^\psi(G) = |S| - (1-b_0^\psi(G)) - \dim_\psi(\overline{Z_{G,S}}).
\]
Let $P \colon \ell^\psi(G)^S \to \overline{Z_{G,S}}$ denote the orthogonal projection. By definition
\[
	\dim_\psi(\overline{Z_{G,S}}) = \sum_{s \in S} \langle P\bar{s}, \bar{s} \rangle_\psi
\]
and the result follows from Lemma \ref{lem:projection-estimate}
\end{proof}
\begin{remark}\label{rem:b0}
It seems surprising that the value on the right hand side is independent from the chosen set of generators. This is a consequence of the homotopy invariance of the $\psi$-Betti numbers, which can be proven using the standard argument; e.g.~\cite[Thm. 3.18]{Kammeyer} or~\cite{Luck}.  
\end{remark}

\section{Torsion groups}\label{sec:torsion}
In view of Remark \ref{rem:bar}, the following result implies Theorem~\ref{thm:torsion-bound-intro}.
\begin{theorem}\label{thm:torsion-bound}
Let $p$ be a prime.
Let $G$ be a torsion group of exponent $p$.
Then  $\bar{b}_1^{(2)}(G) \leq 2p-2$.
\end{theorem}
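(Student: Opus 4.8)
The plan is to reduce everything to a bound on finitely generated subgroups and then apply Lemma~\ref{lem:main-lemma} to each of them. So let $H \leq G$ be finitely generated; it is again a torsion group of exponent $p$, and since $b_1^{(2)}(G) \le 2p-2$ would follow from $b_1^{(2)}(H) \le 2p-2$ for all such $H$, it suffices to treat the finitely generated case. Choose a finite generating set $S$ for $H$. By Lemma~\ref{lem:main-lemma},
\[
	b_1^{(2)}(H) = |S| - 1 + b_0^{(2)}(H) - \sum_{s \in S} \sup_{z \in Z_{H,S}} \frac{|\langle z, \bar{s} \rangle|^2}{\langle z, z \rangle},
\]
so it is enough to produce, for \emph{each} generator $s \in S$, an explicit finite cycle $z = z_s$ in $\Cay(H,S)$ with a good ratio $|\langle z_s, \bar s\rangle|^2 / \langle z_s, z_s\rangle$; since $|S|-1 - b_1^{(2)}(H)$ is what we are bounding (and $b_0^{(2)}(H) \ge 0$ harmlessly), we want $\sum_s \langle Pz_s,\dots\rangle$-type contributions to total at least $|S| - (2p-2)$, i.e.\ each generator should contribute close to $1$ on average.

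The key geometric input is that $s^p = 1$ in $H$, so the path $1_H \to s \to s^2 \to \cdots \to s^{p-1} \to 1_H$ is a closed cycle: set $z_s = \sum_{i=0}^{p-1}(s^i, s^{i+1}) \in Z_{H,S}$. All $p$ of these edges are $G$-translates of $\bar s$, so they are pairwise orthogonal unless two of the group elements $1, s, \dots, s^{p-1}$ coincide — but they are distinct precisely because $s$ has order exactly $p$ (if $s$ has order $1$ it is trivial and can be discarded from $S$; if some generator has order a proper divisor of $p$, then $p$ prime forces order $1$). Hence $\langle z_s, z_s\rangle = p$ and $\langle z_s, \bar s\rangle = 1$, giving the single-generator ratio $1/p$. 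That alone yields only $b_1^{(2)}(H) \le |S| - 1 + b_0^{(2)}(H) - |S|/p$, which is not bounded independently of $|S|$. The real work, therefore, is to exploit relations \emph{among different generators}: for each pair $s,t \in S$, the element $st$ also has order dividing $p$, hence order $1$ or $p$, producing a cycle of length $2p$ (alternating $\bar s$- and $\bar t$-edges) that pairs nontrivially with both $\bar s$ and $\bar t$. More generally one uses that every word has order dividing $p$. The plan is to combine the cycle $z_s$ with cycles coming from products $s t_1 t_2 \cdots$ to boost each generator's projected norm; one expects each generator to pick up a contribution of the form $1 - O(1/p)$ once the interactions with the other generators are taken into account, so that $\sum_s \sup_z \frac{|\langle z,\bar s\rangle|^2}{\langle z,z\rangle} \ge |S| - (2p-2)$, rearranging to the claimed bound $b_1^{(2)}(H) \le 2p - 2$.

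Concretely, I would argue as follows. Fix $s \in S$ and consider, for the remaining generators $t \in S \setminus\{s\}$, the element $g_t := s t$ which satisfies $g_t^p = 1$ in $H$; this gives a cycle $c_t$ of length $2p$ using the edges of the paths $g_t^i \to g_t^i s \to g_t^{i+1}$. The edge $(g_t^i, g_t^i s)$ is a translate of $\bar s$ and $(g_t^i s, g_t^{i+1})$ is a translate of $\bar t$. A linear combination $z = z_s + \sum_{t} \lambda_t c_t$ can be chosen so that its inner product with $\bar s$ is large relative to its norm: the $\bar s$-components of the $c_t$ all add constructively with $z_s$, whereas the $\bar t$-components (for distinct $t$) occupy orthogonal edge-directions and so only inflate the norm additively. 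Optimizing the $\lambda_t$ (a finite-dimensional quadratic optimization, where distinctness of the relevant group elements — again guaranteed by $p$ being prime and $s,t$ nontrivial — ensures the Gram matrix is the expected one up to harmless error terms) should yield $\sup_z \frac{|\langle z,\bar s\rangle|^2}{\langle z,z\rangle} \ge 1 - \frac{c}{p}$ for a constant $c \le 2$ independent of $|S|$. Summing over $s \in S$ and substituting into Lemma~\ref{lem:main-lemma} gives $b_1^{(2)}(H) \le |S| - 1 + b_0^{(2)}(H) - |S|(1 - \frac{c}{p}) = \frac{c}{p}|S| - 1 + b_0^{(2)}(H)$, which is the wrong shape — so in fact the correct construction must make the per-generator deficit \emph{absolute} rather than proportional, e.g.\ by using a single long cycle that threads through all generators at once (a cycle of length roughly $p|S|$ realizing a product $s_1 s_2 \cdots s_{|S|}$ of order dividing $p$, possibly iterated). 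The main obstacle, and the part requiring genuine care, is exactly this: organizing the cycles so that the total shortfall $\sum_s\bigl(1 - \sup_z \frac{|\langle z,\bar s\rangle|^2}{\langle z,z\rangle}\bigr)$ is bounded by $2p-2$ \emph{uniformly in the number of generators}, which forces one to use cycles that simultaneously witness many generators rather than one cycle per generator, and to control the Gram matrix of such a long cycle against the collection of edges $\{\bar s\}_{s\in S}$.
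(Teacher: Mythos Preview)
You have the right cycles---the relations $(st)^p=1$ for $t\in S\setminus\{s\}$---but you miscompute their effect and then head off in the wrong direction. The missing ingredient is to take $S$ \emph{minimal}. Minimality is exactly what forces the cycles $c_t$ (for varying $t$) to be almost orthogonal: if $a,b,c\in S$ are distinct and $\langle ab\rangle\cap\langle ac\rangle\neq\{1\}$, then these two cyclic groups of prime order coincide, so $ac=(ab)^k$ for some $k$ and hence $c\in\langle a,b\rangle$, contradicting minimality. Thus the length-$2p$ cycles $c_b$ from $(ab)^p$ share \emph{only} the edge $\bar a$, and summing them (all $\lambda_t=1$, no need for the extra $z_s$) gives $z_a=\sum_{b\neq a}c_b$ with
\[
\langle z_a,\bar a\rangle = N-1,\qquad \langle z_a,z_a\rangle = (N-1)^2+(N-1)(2p-1),
\]
where $N=|S|$. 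So the ratio is $\dfrac{N-1}{N+2p-2}$, not $1-\tfrac{c}{p}$. Plugging into Lemma~\ref{lem:main-lemma} (with $b_0^{(2)}=0$ for $G$ infinite) yields
\[
b_1^{(2)}(H)\ \le\ N-1-\frac{N(N-1)}{N+2p-2}\ =\ \frac{(N-1)(2p-2)}{N+2p-2}\ \le\ 2p-2,
\]
which is the desired uniform bound. The per-generator deficit is $\tfrac{2p-1}{N+2p-2}$, which \emph{decreases} with $N$; your predicted $1-\tfrac{c}{p}$ had it constant in $N$, which is what sent you looking for long cycles threading all generators. No such device is needed.

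The second, more technical, point: without minimality your claim that ``distinctness of the relevant group elements [is] guaranteed by $p$ being prime and $s,t$ nontrivial'' is false. Nothing prevents $\langle st_1\rangle=\langle st_2\rangle$ for distinct $t_1,t_2$ in a redundant generating set, in which case the cycles $c_{t_1},c_{t_2}$ coincide and the Gram matrix degenerates. So the choice of a minimal $S$ is not cosmetic; it is the step that makes the orthogonality (and hence the whole computation) go through.
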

\begin{proof}
We may assume that $G$ is infinite (and $b_0^{(2)} = 0$), otherwise $b_1^{(2)}(G) = 0$ and there is nothing to show.
By the definition of  $\bar{b}_1^{(2)}(G)$ (see Remark \ref{rem:bar}), we may assume that $G$ is finitely generated.

We choose a minimal generating set $S$ of $G$ and denote the number of elements by $N = |S|$. Since all elements of $G$ have prime order, all pairwise distinct elements $a,b,c \in S$ satisfy
\begin{equation}\label{condition:cyclic}
	\langle ac \rangle \cap \langle ab \rangle = \{1\}
\end{equation}
Suppose for a contradiction that there are three distinct elements $a,b,c \in S$ with
$\langle ac \rangle \cap \langle ab \rangle \neq \{1\}$, then these cyclic groups of prime order coincide and 
\[
	ac = (ab)^k
\]
for some $k \in \bbN$, i.e., $c \in \langle a,b \rangle$ which contradicts the minimality of $S$.

For all $a \in S$ we have $N-1$ relations
\[
	(ab)^p
\] 
of length $2p$
for all $b \neq a$ in $S$.
By condition \eqref{condition:cyclic}, the only common edge in the Cayley graph is the first edge $\bar{a}$ from $1$ to $a$.
Summing up these cycles, we obtain a cycle $z_a$ in $\mathrm{Cay}(G,S)$ with
\[
	\frac{\langle z_a,\bar{a} \rangle^2}{\langle z_a, z_a \rangle} = \frac{(N-1)^2}{(N-1)^2+(N-1)(2p-1)} = \frac{1}{1+\frac{2p-1}{N-1}}.
\]
We deduce from Lemma \ref{lem:main-lemma} that
\begin{eqnarray*}
b_1^{(2)}(G)  &\leq& N-1 - \sum_{a \in S} \frac{\langle z_a,\bar{a} \rangle^2}{\langle z_a, z_a \rangle} \ \ = \ \  N-1 - \frac{N}{1+\frac{2p-1}{N-1}} \\
	&=& \frac{2p-2}{1+\frac{2p-1}{N-1}} \ \ \leqslant \ \ 2p-2.
\end{eqnarray*}
\end{proof}
\begin{theorem}\label{thm:vanishing-torsion}
Let $p$ be a prime number and let $G$ be a countable torsion group of exponent $p$.
If $G$ has an infinite normal subgroup $N$ of infinite index, then 
\[
	b_1^{(2)}(G) = 0.
\]
\end{theorem}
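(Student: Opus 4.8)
The plan is to upgrade the finite bound of Theorem~\ref{thm:torsion-bound} to the sharp value $0$ by exploiting that $N$ is not merely a subgroup with finite first $\ell^2$-Betti number but an infinite \emph{normal} one of infinite index. First I would pass to $\bar{b}_1^{(2)}$: by Remark~\ref{rem:bar} we always have $b_1^{(2)}(G)\le\bar{b}_1^{(2)}(G)$, and since $\ell^2$-Betti numbers are non-negative it suffices to prove $\bar{b}_1^{(2)}(G)=0$. The subgroup $N$ is again a torsion group of exponent $p$, and it is infinite, so Theorem~\ref{thm:torsion-bound} applies to $N$ and gives $\bar{b}_1^{(2)}(N)\le 2p-2<\infty$. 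This finiteness is the crucial input.

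The next step is to feed this into an extension/q-normality argument. An infinite normal subgroup is in particular q-normal: for every $g\in G$ the intersection $gNg^{-1}\cap N=N$ is infinite, so the set of such $g$ is all of $G$ and generates $G$. Hence $N$ is an infinite q-normal subgroup of infinite index with $\bar{b}_1^{(2)}(N)<\infty$, and applying the q-normality estimate of Section~\ref{sec:q-normal} (Corollary~\ref{cor:q-psi-normal} for the regular character $\delta$), equivalently the theorem of Peterson--Thom \cite[Theorem~5.6]{ThomPeterson}, together with $[G:N]=\infty$ forces $b_1^{(2)}(G)=0$; alternatively one may quote the corresponding extension result for $\ell^2$-Betti numbers of \cite{Luck} using $b_0^{(2)}(G/N)=0$. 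In Cayley-graph language the mechanism is that whenever one enlarges a finitely generated subgroup by an element $g$ conjugating a chunk of $N$ into $N$, that element arrives together with a full ``generator's worth'' of relations coming from the infinite group $N\cap gNg^{-1}=N$, so the rate of relations in the Cayley graph stays maximal; the infinite index then makes the residual contribution of $\bar{b}_1^{(2)}(N)$ disappear, exactly as the finite-index multiplication formula would give $\bar{b}_1^{(2)}(N)/[G:N]=0$.

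I expect the genuine obstacle to be precisely this last step, for two reasons. First, $N$ need not be finitely generated, so one cannot simply invoke the multiplicativity of $\ell^2$-Betti numbers under finite index; one has to run the q-normal (or Lyndon--Hochschild--Serre) argument itself for $\bar{b}_1^{(2)}$ of groups that are not finitely generated, writing $G$ as a directed union of finitely generated subgroups and passing to the limit via Lemma~\ref{lem:liminf}. Second, and more structurally, the quotient $G/N$ may be a finitely generated infinite group of exponent $p$ with no proper finite-index subgroups at all, so there is no hope of exhausting $G$ by finitely generated subgroups each containing a large subgroup of $N$ of large \emph{finite} index; the drop to $0$ must therefore really be produced by the q-normality of $N$. (In the special case where $G/N$ happens to be locally finite this obstruction disappears and one gets an elementary proof: writing $G=\bigcup_i G_i$ with $G_i$ finitely generated, the subgroup $N\cap G_i$ is normal of finite index $|G_iN/N|$ in $G_i$, hence finitely generated, so $b_1^{(2)}(G_i)\le (2p-2)/|G_iN/N|\to 0$ and Lemma~\ref{lem:liminf} finishes it.) Packaging the q-normal estimate so that it applies verbatim in this non-finitely-generated setting is where the technical work lies.
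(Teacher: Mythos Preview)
Your first step---bounding $\bar{b}_1^{(2)}(N)\le 2p-2$ via Theorem~\ref{thm:torsion-bound}---is exactly what the paper does. The divergence is in the second step. The paper's proof is a two-liner: from Remark~\ref{rem:bar} one has $b_1^{(2)}(N)\le\bar{b}_1^{(2)}(N)\le 2p-2<\infty$ for the \emph{genuine} first $\ell^2$-Betti number of $N$, and then Gaboriau's theorem \cite[Thm.~6.8]{Gaboriau2002} (an infinite normal subgroup of infinite index with finite $b_1^{(2)}$ forces $b_1^{(2)}(G)=0$) is quoted as a black box. Your ``alternative'' via the extension result in \cite{Luck} is essentially the same citation; that is the whole proof.

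By contrast, your preferred route through Corollary~\ref{cor:q-psi-normal} does not close the argument on its own. That corollary only yields $\bar{b}_1^{(2)}(G)\le\bar{b}_1^{(2)}(N)\le 2p-2$; the index $[G:N]$ does not appear in its statement, so ``together with $[G:N]=\infty$'' gives nothing further. The heuristic that the residual $\bar{b}_1^{(2)}(N)$ should get divided by $[G:N]$ is precisely the content of Gaboriau's theorem, and proving it from scratch with the Cayley-graph machinery of this paper would be substantially harder than what is needed here. So the technical obstacles you anticipate (non-finitely-generated $N$, absence of finite-index subgroups in $G/N$, packaging q-normality for $\bar{b}_1$) are real for that route but are simply sidestepped by citing Gaboriau.
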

\begin{proof}
By Theorem \ref{thm:torsion-bound} and Remark \ref{rem:bar} we have $b_1^{(2)}(N) \leq \bar{b}_1^{(2)}(N) \leq 2p-2$. By Gaboriau's Theorem~\cite[Thm.~6.8]{Gaboriau2002}, this implies $b_1^{(2)}(G) = 0$.
\end{proof}

\begin{proof}[Proof of Corollary \ref{corB(3,100)}]
Recall that $B(m,p)$ denotes the Burnside group of exponent $p$ and rank $m$. 
Since $B(1,p)$ is finite, we have $b_1^{(2)}(B(1,p)) = 0$.

Assume $m \geq 2$.
For sufficiently large $p$, the main result of \cite{Ivanov2003} implies that $B(m,p)$ contains a $Q$-subgroup $H$ which is isomorphic to $B(\infty,p)$. 
A $Q$-subgroup has the property that the normal closure $\langle K \rangle^{B(m,p)}$ in $B(m,p)$ of any normal subgroup $K \trianglelefteq H$ 
intersects $H$ exactly in $K$.

Take a projection from $B(\infty,p)$ onto $B(\infty,p)$ with an infinite kernel $K$. Then the normal closure $\langle K \rangle^{B(m,p)}$ is an infinite normal subgroup of $B(m,p)$ of infinite index. Now Theorem \ref{thm:vanishing-torsion} implies the result.
\end{proof}

\begin{remark}\label{remB(3,100)}
\begin{enumerate}
\item Ivanov \cite{Ivanov2003} quantifies sufficiently large as $p > 10^{78}$.

\item  One can also deduce $b_1^{(2)}(B(m,p)) = 0$ for $m \geq 3$ under the assumption that $B(2,p)$ is infinite\footnote{According to Adian \cite{Adian2015} the Burnside groups $B(2,p)$ are infinite for all $p > 100$.}
using the normal subgroup $N = \ker( B(m,p) \to B(m-1,p) )$.

Indeed, let $x_1,x_2,\dots,x_m$ be a free generating set of $B(m,p)$ such that $N$ is the normal closure of $x_1$ in $B(m,p)$. Since $\langle x_1, x_2 \rangle \subseteq N\langle x_2 \rangle$ and $\langle x_1,x_2 \rangle \cong B(p,2)$ is infinite, we deduce that $N$ is infinite. Moreover, $N$ has infinite index, since  $B(m,p)/N \cong B(m-1,p)$.
\item We expect that $b_1^{(2)}(B(m,p)) = 0$ for all $p, m$. On the other hand, if $b_1^{(2)}(B(m,p)) > 0$ holds for some $m$ and $p$, then this offers a simple solution to the restricted Burnside problem for $m$-generated $p$-groups. More precisely,
every finite index normal subgroup $N \trianglelefteq B(m,p)$ satisfies
\[
	|B(m,p):N| \cdot  b_1^{(2)}(B(m,p)) = b_1^{(2)}(N) \leq 2p-2
\]
by Theorem \ref{thm:torsion-bound} and this inequality imposes an upper bound on the index of $N$.
\end{enumerate} 
\end{remark}

\section{$q$-normality and applications}\label{sec:q-normal}

\begin{lemma} \label{lemHa}
Let $G=\langle H,a\rangle$ be a group and let $\psi\in\Ch(G)$ be a character. Assume that $aHa^{-1} \cap H$ contains a $\psi$-regular subgroup of order $n \in \mathbb{N} \cup \{\infty\}$. Then
\[
\bar{b}_{1}^{\psi}(G) -  b_0^\psi(G) \leqslant \bar{b}_{1}^{\psi}(H) + \frac{3+ 2\Re(\psi(a))}{n+2+2\Re(\psi(a))} 
\]
In particular
 \[
\bar{b}_{1}^{\psi}(G) \leqslant \bar{b}_{1}^{\psi}(H).\]
for $n = \infty$.
\end{lemma}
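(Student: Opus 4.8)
The plan is to reduce to finitely generated subgroups and then apply Lemma~\ref{lem:main-lemma} after splitting off the generator $a$. Write $r := \Re(\psi(a))$ and fix a $\psi$-regular subgroup $K \leq aHa^{-1}\cap H$ of order $n$; note $\tfrac{3+2r}{n+2+2r}\in[0,1]$ and $1-\tfrac{3+2r}{n+2+2r}=\tfrac{n-1}{n+2+2r}$. Since $\bar{b}_1^\psi(G)=\liminf_{G_0} b_1^\psi(G_0)$ over the finitely generated $G_0\leq G$, and since a $\liminf$ over a cofinal subfamily dominates the full one, it suffices to exhibit a cofinal, directed family of finitely generated $G_1\leq G$ (with union $G$) satisfying $b_1^\psi(G_1)\leq b_0^\psi(G_1)+\bar{b}_1^\psi(H)+\epsilon+\tfrac{3+2r}{n+2+2r}$ for a prescribed $\epsilon>0$; letting $\epsilon\to0$ and using $b_0^\psi(G_1)\to b_0^\psi(G)$ along $G=\bigcup G_1$ (Lemma~\ref{lem:b0}\eqref{it:b0-lim}) then gives the first inequality. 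For $n=\infty$ the added term is $0$ and $b_0^\psi(G)=0$ by Lemma~\ref{lem:b0}\eqref{it:b0-regular}, which yields the ``in particular'' statement.

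To build such a $G_1$: choose a finite subset $T\subseteq K\setminus\{1\}$ (take $T=K\setminus\{1\}$ if $n<\infty$, and $|T|$ arbitrarily large if $n=\infty$), and — using that the finitely generated $H'\leq H$ with $b_1^\psi(H')\leq\bar{b}_1^\psi(H)+\epsilon$ are cofinal — pick such an $H_1$ containing $T\cup a^{-1}Ta$ together with the $H$-letters of a fixed finite generating set of a prescribed finitely generated $G_0\leq G$. Set $G_1:=\langle H_1,a\rangle\supseteq G_0$ and $S:=S_H\sqcup\{a\}$, where $S_H$ is a finite generating set of $H_1$ chosen to contain $T\cup a^{-1}Ta$; we may assume $a\notin H_1$, since otherwise $G_1=H_1$ and the bound is immediate. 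Apply Lemma~\ref{lem:main-lemma} to $(G_1,S)$. Because $Z_{H_1,S_H}\subseteq Z_{G_1,S}$ (edges of $\Cay(H_1,S_H)$ are edges of $\Cay(G_1,S)$ and $\psi$ restricts), each supremum attached to $s\in S_H$ is at least its $\Cay(H_1,S_H)$-analogue; summing these and applying Lemma~\ref{lem:main-lemma} to $(H_1,S_H)$ gives $\sum_{s\in S_H}\sup_{z\in Z_{G_1,S}}\tfrac{|\langle z,\bar s\rangle_\psi|^2}{\langle z,z\rangle_\psi}\geq |S_H|-1+b_0^\psi(H_1)-b_1^\psi(H_1)$. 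Together with $b_0^\psi(G_1)\leq b_0^\psi(H_1)$ (Lemma~\ref{lem:b0}\eqref{it:b0-formula}, as $J_{H_1}\subseteq J_{G_1}$) and $b_0^\psi(H_1)\geq0$, Lemma~\ref{lem:main-lemma} reduces the claim to the single estimate
\[
\sup_{z\in Z_{G_1,S}}\frac{|\langle z,\bar a\rangle_\psi|^2}{\langle z,z\rangle_\psi}\;\geq\;\frac{n-1}{n+2+2r}
\]
(respectively $\to1$ as $|T|\to\infty$ if $n=\infty$), since then $b_1^\psi(G_1)-b_0^\psi(G_1)\leq 1+b_1^\psi(H_1)-\tfrac{n-1}{n+2+2r}=b_1^\psi(H_1)+\tfrac{3+2r}{n+2+2r}$.

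For this estimate I would use the relations $ta=a\,(a^{-1}ta)$, $t\in T$, noting $a^{-1}ta\in S_H$. Each yields a $4$-cycle $\square_t:=\bar a+a\,\overline{a^{-1}ta}-\bar t-t\,\bar a\in Z_{G_1,S}$ passing through $\bar a$, and for $t\neq u$ the cycles $\square_t,\square_u$ have only the edge $\bar a$ in common. Since $\psi(t)=0$ for $t\in K\setminus\{1\}$, the sum $z:=\sum_{t\in T}\square_t$ has $\langle z,\bar a\rangle_\psi=|T|$, and a direct computation using $\psi$-regularity of $K$ and conjugation invariance ($\psi(a^{-1}ta)=\psi(t)=0$) gives $\langle z,z\rangle_\psi=|T|^2+3|T|-2r\,b$ with $b=\#\{t\in T:a^{-1}ta\in T\}$. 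When $n=\infty$ one may arrange $b=0$ (possible since $K$ is infinite), so the ratio is $\tfrac{|T|}{|T|+3}\to1$; this settles the infinite case, and also the finite case when $r\geq0$ (then $\tfrac{|T|^2}{|T|^2+3|T|-2rb}\geq\tfrac{n-1}{n+2}\geq\tfrac{n-1}{n+2+2r}$). The main obstacle is the finite case with $r<0$: there the cycles $\square_t$ are not efficient enough, and one must enlarge $z$ by adding, for each $t$, a translate of the relation read ``on the other side'' of $a$, engineered to create a pair of equally-labelled edges whose inner product is $\psi(a)$ — contributing $+2r$, which for $r<0$ lowers $\langle z,z\rangle_\psi$ — while cancelling the auxiliary edges this introduces and accounting for the coincidences produced when $a$ conjugates part of $K$ back into $K$. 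Verifying that such a refined cycle realizes the constant $\tfrac{n-1}{n+2+2r}$ exactly is the computational heart of the argument; the remainder is routine bookkeeping with Lemmas~\ref{lem:liminf} and~\ref{lem:b0}.
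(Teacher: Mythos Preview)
Your approach is essentially the paper's: pass to finitely generated $G'=\langle H_S,a\rangle$, apply Lemma~\ref{lem:main-lemma}, bound the suprema at $s\in S_H$ via the inclusion $Z_{H_S,S_H}\subseteq Z_{G',S'}$ (recovering $b_1^\psi(H_S)$), and for the $\bar a$-term exhibit the sum $z$ of the $4$-cycles coming from $t\cdot a=a\cdot(a^{-1}ta)$ for $t$ in a finite piece of $K\setminus\{1\}$. Your reduction via Lemma~\ref{lem:liminf} and Lemma~\ref{lem:b0} is exactly what the paper does, and your computation $\langle z,\bar a\rangle_\psi=|T|$, $\langle z,z\rangle_\psi=|T|^2+3|T|-2rb$ with $b=\#\{t\in T:a^{-1}ta\in T\}$ is correct.

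The one substantive divergence is your treatment of the constant when $r=\Re(\psi(a))<0$ and $n<\infty$. The paper does \emph{not} perform the ``enlarging $z$ on the other side of $a$'' refinement you sketch; it simply asserts the upper bound $\langle z,z\rangle_\psi\le k^2+(3+2r)k$ in one stroke (display~\eqref{eq:bound}), taking the cross-terms $\langle (a,ah_i),(1,ah_ja^{-1})\rangle_\psi=\psi(a)$ and its conjugate at face value. Your more careful bookkeeping actually tracks the signs coming from the $-1$ coefficients on the edges $(1,ah_ja^{-1})$ in $z_j$, and this is why you find an obstruction for $r<0$ that the paper's write-up does not flag. In other words, the discrepancy you noticed is real: for $r\ge 0$ your cycle already gives the stated constant (indeed a slightly sharper one), for $n=\infty$ the constant is irrelevant, and only in the finite case with $r<0$ does the stated constant $\tfrac{3+2r}{n+2+2r}$ not follow from the cycle $z$ alone. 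Your proposed fix is plausible but not what the paper does; the paper's own argument is the coarser estimate, and your analysis is the more accurate one on this point.
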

\begin{proof}
Without loss of generality we assume that $a \not\in H$.
We denote the $\psi$-regular subgroup of order $n$ in $H \cap aHa^{-1}$ by $K$.
For a finite subset $S \subseteq H$, we define $H_S = \langle S \rangle$.
Since $K$ is $\psi$-regular, we obtain $b_0^\psi(H) \leqslant b_0^\psi(G) \leqslant \frac{1}{n}$ and $b_0^\psi(H_S) \leq |K\cap H_S|^{-1}$ by Remark~\ref{rem:b0}.

For $n = \infty$, let $S \subseteq H$ be any finite subset and denote by $h_1,h_2, \dots, h_k$ the pairwise distinct elements of $S \cap aSa^{-1} \cap K$. If $n< \infty$, we choose $S \subseteq H$ such that
 $$S \cap aSa^{-1} \ = 
\ K \setminus\{1\} \ = \ \{h_1,h_2, \dots, h_k\}.$$
In both situations we define $S' = S \cup \{a\}$ and $G' = \langle S' \rangle$. 
Lemma \ref{lem:main-lemma} implies
\begin{eqnarray}
b_{1}^{\psi}(G') - b_0^\psi(G')  &=& |S'| - 1 - \sum_{s \in S'} \sup_{z \in Z_{G',S'}} \frac{|\langle z,\bar{s}\rangle_\psi|^2}{\langle z,z\rangle_\psi}\nonumber\\
&\leq& | S | + 1 - 1 - \sum_{s \in S} \sup_{z \in Z_{H_S,S}} \frac{|\langle z,\bar{s}\rangle_\psi|^2}{\langle z,z\rangle_\psi} -  \sup_{z \in Z_{G',S'}} \frac{|\langle z,\bar{a}\rangle_\psi|^2}{\langle z,z\rangle_\psi}\nonumber\\
&\leq& b_{1}^{(2)}(H_S)  + 1 - \sup_{z \in Z_{G,S'}} \frac{|\langle z,\bar{a}\rangle_\psi|^2}{\langle z,z\rangle_\psi}. \label{inequ1}
\end{eqnarray} 
To obtain a lower bound for $\sup_{z \in Z_{G,S'}} \frac{|\langle z,\bar{a}\rangle_\psi|^2}{\langle z,z\rangle_\psi}$, we consider the Cayley graph $\Cay(G',S')$ of $G'$ and exhibit a suitable cycle $z$. Each relation $ah_ia^{-1}(ah_ia^{-1})^{-1}$ provides a cycle $z_i$ of length $4$ in $\Cay(G',S')$, i.e.,
\[
	z_i = \underbrace{(1,a)}_{=\bar{a}} + (a,ah_i) - (ah_ia^{-1},ah_i) - (1,ah_ia^{-1}).
\] 
Note that the cycles $z_i$ touch exactly four vertices, since $h_i \neq 1$ and $a \not\in H$. 
In additon, the cycles $z_i$ have no common edges, except for $\bar{a}$. We define $z = \sum_{i=1}^k z_i$ to be the sum of these cycles. 
Since $\psi(ah_ia^{-1}) = \psi(h_i) = 0$ holds for all $i\leq k$, we deduce
\[
	\langle z,\bar{a} \rangle_\psi =  k \langle \bar{a}, \bar{a} \rangle_\psi - \sum_{i=1}^k \langle (ah_ia^{-1},ah_i), \bar{a} \rangle_\psi =
	k -  \sum_{i=1}^k \psi(ah_ia^{-1}) = k
\]
We note further (using again that $K$ is $\psi$-regular) that for given $i,j \leq k$ the edges $\neq \bar{a}$ in $z_i, z_j$ are orthogonal unless $h_i = ah_ja^{-1}$ or $ah_ia^{-1} = h_j$. Each of these cases occurs at most once for every $i$ and then $\langle (a,ah_i), (1,ah_ja^{-1}) \rangle_\psi = \psi(a)$ and $\langle (1,ah_ia^{-1}), (a,ah_j) \rangle_\psi = \overline{\psi(a)}$ respectively. We deduce 
\begin{align}
\langle z, z\rangle_\psi &= \sum_{i,j} \langle z_i,z_j \rangle_\psi 
\leq  \underbrace{4k}_{i=j} +  \underbrace{k^2 - k + k (\psi(a) + \overline{\psi(a)})}_{i\neq j} \nonumber \\ &\leq k^2 + (3 +2 \Re(\psi(a))) k \label{eq:bound}
\end{align}
and conclude
\[
	\frac{\langle z,\bar{a}\rangle_\psi^2}{\langle z,z\rangle_\psi} \geq \frac{1}{1 + \frac{3+2\Re(\psi(a))}{k}}.
\]
Finally, we use this cycle in combination with inequality \eqref{inequ1} to obtain
\begin{eqnarray*}
b_{1}^{\psi}(G') - b_0^\psi(G') &\leqslant&  b_{1}^{(2)}(H_S) + 1 - \sup_{z \in Z_{G,S'}} \frac{|\langle z,\bar{a}\rangle_\psi|^2}{\langle z,z\rangle_\psi}\\
&\leqslant& b_{1}^{(2)}(H_S)  + 1 -  \frac{1}{1 + \frac{3 +2 \Re(\psi(a))}{k}}\\
&=& b_{1}^{(2)}(H_S) + \frac{3 +2 \Re(\psi(a))}{k+3 +2 \Re(\psi(a))}  \ \ \ \underset{k \rightarrow \infty}{\longrightarrow} \ b_{1}^{(2)}(H_S)
\end{eqnarray*}
For $n=\infty$ we can make $k$ arbitrary large. In the case $n<\infty$ we have $k=n-1$ by construction.
We note that every finitely generated subgroup of $G$ is contained in a group of the form $G'$.
The result follows from Lemma \ref{lem:liminf} and Lemma \ref{lem:b0} \eqref{it:b0-lim}.

If $n = \infty$, then $G$ contains an infinite $\psi$-regular subgroup and $b_0^\psi(G) = 0$ by Lemma \ref{lem:b0} \eqref{it:b0-regular}.
\end{proof}
In the spirit of Popa \cite{Popa} and Thom-Peterson \cite{ThomPeterson} we introduce the following notion.
\begin{definition}
Let $G$ be a group and let $\psi\in \Ch(G)$. A subgroup $H \leq G$ is \emph{q-$\psi$-normal}, if there is a set $A \subseteq G$ such that $G = \langle H \cup A \rangle$ and $H \cap aHa^{-1}$ contains an infinite $\psi$-regular subgroup for all $a \in A$.

A subgroup $H \leq G$ is \emph{weakly} q-$\psi$-normal, if there is an ordinal number $\alpha$ and an increasing chain of subgroups $H_0 = H$ to $H_\alpha = G$ such that 
$\bigcup_{\beta < \gamma} H_\beta$ is q-$\psi$-normal in $H_\gamma$ for all $\gamma \leq \alpha$.
\end{definition}
Based on Lemma \ref{lemHa} we obtain the following analog of~\cite[Theorem~5.6]{ThomPeterson}.
\begin{corollary}\label{cor:q-psi-normal}
Let $G$ be a group and let $\psi \in \Ch(G)$. If $H \leq G$ is a weakly q-$\psi$-normal subgroup, then 
\[
	\bar{b}_1^\psi(G) \leq \bar{b}_1^{\psi}(H).
\]
\end{corollary}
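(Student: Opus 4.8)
The plan is to prove Corollary~\ref{cor:q-psi-normal} by transfinite induction on the ordinal length $\alpha$ of the chain $H = H_0 \subseteq H_1 \subseteq \cdots \subseteq H_\alpha = G$ witnessing that $H$ is weakly q-$\psi$-normal in $G$. The claim is that $\bar{b}_1^\psi(H_\gamma) \leq \bar{b}_1^\psi(H)$ for every $\gamma \leq \alpha$, with the case $\gamma = \alpha$ giving the result. The base case $\gamma = 0$ is trivial, so we treat successor and limit ordinals separately.

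For a successor ordinal $\gamma = \beta + 1$, we know by definition that $H_\beta$ is q-$\psi$-normal in $H_{\beta+1}$, so there is a set $A$ with $H_{\beta+1} = \langle H_\beta \cup A \rangle$ and $H_\beta \cap aH_\beta a^{-1}$ containing an infinite $\psi$-regular subgroup for every $a \in A$. The strategy here is to write $H_{\beta+1}$ as a directed union of the subgroups $\langle H_\beta \cup F \rangle$ over finite subsets $F \subseteq A$, and within each such subgroup to adjoin the elements of $F$ one at a time, applying Lemma~\ref{lemHa} with $n = \infty$ at each step. Concretely, if $F = \{a_1,\dots,a_m\}$, set $L_0 = H_\beta$ and $L_j = \langle L_{j-1}, a_j\rangle$; since $L_{j-1} \supseteq H_\beta$ we have $a_j L_{j-1} a_j^{-1} \cap L_{j-1} \supseteq a_j H_\beta a_j^{-1} \cap H_\beta$, which still contains the infinite $\psi$-regular subgroup associated to $a_j$, so Lemma~\ref{lemHa} gives $\bar{b}_1^\psi(L_j) \leq \bar{b}_1^\psi(L_{j-1})$. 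Chaining these inequalities yields $\bar{b}_1^\psi(\langle H_\beta \cup F\rangle) \leq \bar{b}_1^\psi(H_\beta)$, and then Lemma~\ref{lem:liminf} applied to the directed union $H_{\beta+1} = \bigcup_F \langle H_\beta \cup F\rangle$ gives $\bar{b}_1^\psi(H_{\beta+1}) \leq \liminf_F \bar{b}_1^\psi(\langle H_\beta \cup F\rangle) \leq \bar{b}_1^\psi(H_\beta) \leq \bar{b}_1^\psi(H)$ by the inductive hypothesis.

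For a limit ordinal $\gamma$, the definition tells us that $\bigcup_{\beta < \gamma} H_\beta$ is q-$\psi$-normal in $H_\gamma$; but by construction this union is exactly $H_\gamma$ when $\gamma$ is a limit stage of the chain — wait, more carefully, the chain is increasing and the condition is stated for $\bigcup_{\beta<\gamma}H_\beta$ inside $H_\gamma$, so one first observes $H_\gamma = \bigcup_{\beta<\gamma} H_\beta$ is a directed union (this should follow from the chain being increasing and the q-$\psi$-normality condition being about the union itself, which forces it to generate together with $A$; in the cleanest reading one simply has $H_\gamma$ equal to this union, or else reduces to it). Then Lemma~\ref{lem:liminf} gives $\bar{b}_1^\psi(H_\gamma) \leq \liminf_{\beta<\gamma} \bar{b}_1^\psi(H_\beta) \leq \bar{b}_1^\psi(H)$ by the inductive hypothesis on each $\beta < \gamma$.

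The main obstacle I anticipate is bookkeeping at limit ordinals: one must pin down precisely what the definition of weakly q-$\psi$-normal asserts about $H_\gamma$ relative to $\bigcup_{\beta<\gamma}H_\beta$ (whether they coincide, or whether an extra generating set $A$ intervenes that must then be absorbed by a further successor-style argument), and make sure Lemma~\ref{lem:liminf} is being applied to an honest directed union of subgroups. The successor step is essentially routine once the "enlarge $H_\beta$ to $L_{j-1}$ preserves the $\psi$-regular subgroup" observation is in place, and the reduction to finitely generated pieces via directed unions is exactly what Lemmas~\ref{lem:liminf} and~\ref{lemHa} are designed for. I would also double-check that transfinite induction is legitimately packaged — proving the statement "$\bar{b}_1^\psi(H_\gamma)\leq\bar{b}_1^\psi(H)$ for all $\gamma\leq\alpha$" by induction on $\gamma$, rather than inducting on $\alpha$ directly.
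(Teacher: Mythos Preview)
Your approach is essentially the same as the paper's: iterate Lemma~\ref{lemHa} over finite subsets of $A$, pass to the directed union via Lemma~\ref{lem:liminf}, and run a transfinite induction. The only difference is organizational, and it is exactly the source of your hesitation at limit ordinals. The paper first proves the q-$\psi$-normal case as a standalone statement (your ``successor-style'' argument, which nowhere uses that the base group sits in a chain), and then the transfinite induction treats every $\gamma \leq \alpha$ uniformly: the definition asserts that $L_\gamma := \bigcup_{\beta<\gamma} H_\beta$ is q-$\psi$-normal in $H_\gamma$, so the q-$\psi$-normal case gives $\bar{b}_1^\psi(H_\gamma) \leq \bar{b}_1^\psi(L_\gamma)$, and then Lemma~\ref{lem:liminf} together with the inductive hypothesis gives $\bar{b}_1^\psi(L_\gamma) \leq \liminf_{\beta<\gamma} \bar{b}_1^\psi(H_\beta) \leq \bar{b}_1^\psi(H)$. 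There is no need to split into successor and limit cases, and in particular you should \emph{not} assume $H_\gamma = L_\gamma$ at limit stages---the definition does not guarantee this, and the fix you anticipate (absorb the extra generators by a further q-$\psi$-normal step) is precisely what the paper does, just phrased once and for all before the induction begins.
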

\begin{proof} 
Assume that $H$ is q-$\psi$-normal.
Then $G = \langle H \cup A \rangle$ and $H \cap aHa^{-1}$ contains an infinite $\psi$-regular subgroup for all $a \in A$. 
If $A$ is finite, then the assertion follows inductively from Lemma \ref{lemHa}. Assume that $A$ is infinite. For every finite subset $B \subseteq A$, we define $G_B = \langle H \cup B \rangle$. Then $G = \bigcup_{B \subseteq A} G_B$ and Lemma \ref{lem:liminf} implies
\[
	\bar{b}_1^{\psi}(G) \leq \liminf_{B \subseteq A} \bar{b}_1^\psi(G_B) \leq \bar{b}_1^\psi(H).
\]
The result for weakly q-$\psi$-normal subgroups follows by transfinite induction using Lemma~\ref{lem:liminf}.
\end{proof}

\begin{corollary}
Let $G$ be a group and let $\psi \in \Ch(G)$.
\begin{itemize}
\item[(1)] If $G$ is an HNN-extension of $H$ with associated subgroups $A, B$ and $A$ contains an infinite $\psi$-regular subgroup, then we have $\bar{b}_{1}^{\psi}(G) \leq \bar{b}_{1}^{\psi}(H)$.
\item[(2)] If $G= A \ast_{C} B$ is an amalgamated product such that $C$ is q-$\psi$-normal in $B$. Then we have $\overline{b}_{1}^{\psi}(G) \leq \bar{b}_{1}^{\psi}(A)$.
\item[(3)] If $G$ contains an infinite normal amenable $\psi$-regular subgroup, then $b^\psi_1(G) = 0$.
\end{itemize}
\end{corollary}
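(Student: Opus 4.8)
The plan is to deduce all three parts directly from Corollary~\ref{cor:q-psi-normal}, using repeatedly that $\psi$ is a class function: a conjugate $gKg^{-1}$ of a $\psi$-regular subgroup $K$ is again $\psi$-regular, and it is infinite whenever $K$ is. So in each case the whole job is to recognize the relevant subgroup as q-$\psi$-normal.

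For (1) I would write $G = \langle H, t \mid tAt^{-1} = B \rangle$ (the other convention for the stable letter is symmetric) and pick an infinite $\psi$-regular subgroup $K \leq A$. Then $K \leq A \leq H$ gives $tKt^{-1} \leq tHt^{-1}$, while $tKt^{-1} \leq tAt^{-1} = B \leq H$; hence $tKt^{-1}$ is an infinite $\psi$-regular subgroup of $H \cap tHt^{-1}$. Since $G = \langle H \cup \{t\}\rangle$, the subgroup $H$ is q-$\psi$-normal in $G$, and Corollary~\ref{cor:q-psi-normal} yields $\bar{b}_1^\psi(G) \leq \bar{b}_1^\psi(H)$.

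For (2) I would use that in $G = A \ast_C B$ one has $A \cap B = C$ and $G = \langle A, B \rangle$. By hypothesis there is $S \subseteq B$ with $B = \langle C \cup S \rangle$ such that each $C \cap sCs^{-1}$ ($s \in S$) contains an infinite $\psi$-regular subgroup $K_s$. Then $K_s \leq C \leq A$ and $K_s \leq sCs^{-1} \leq sAs^{-1}$, so $K_s \leq A \cap sAs^{-1}$; moreover $G = \langle A \cup C \cup S \rangle = \langle A \cup S \rangle$ because $C \leq A$. Thus $A$ is q-$\psi$-normal in $G$, and Corollary~\ref{cor:q-psi-normal} gives $\bar{b}_1^\psi(G) \leq \bar{b}_1^\psi(A)$.

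For (3) let $N \trianglelefteq G$ be infinite, amenable and $\psi$-regular. Normality gives $N \cap gNg^{-1} = N$ for all $g \in G$, so (taking the ambient set of the definition to be $G$) the subgroup $N$ is q-$\psi$-normal in $G$, whence $\bar{b}_1^\psi(G) \leq \bar{b}_1^\psi(N)$ by Corollary~\ref{cor:q-psi-normal}. It then remains to check $\bar{b}_1^\psi(N) = 0$: for a finitely generated $H \leq N$ the restriction $\psi|_H$ is the regular character (since $\psi$ vanishes on $N \setminus \{1\} \supseteq H \setminus \{1\}$), so $\ell^\psi(H) = \ell^2(H)$ and $b_1^\psi(H) = b_1^{(2)}(H)$, which is $0$ because $H$ is either finite or infinite amenable. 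Hence $\bar{b}_1^\psi(N) = 0$, so $\bar{b}_1^\psi(G) = 0$, and for finitely generated $G$ this reads $b_1^\psi(G) = 0$. I expect the only step needing genuine care to be this last one, namely invoking the classical vanishing of the first $\ell^2$-Betti number of infinite amenable groups (cf.~\cite{Luck}); everything else is bookkeeping around Corollary~\ref{cor:q-psi-normal}.
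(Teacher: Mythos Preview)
Your proof is correct and follows essentially the same route as the paper: in each case you recognize the relevant subgroup as q-$\psi$-normal and invoke Corollary~\ref{cor:q-psi-normal} (the paper cites Lemma~\ref{lemHa} directly for (1), which amounts to the same thing since $G=\langle H,t\rangle$), and for (3) both you and the paper reduce to $b_1^\psi(N)=b_1^{(2)}(N)=0$ via the $\psi$-regularity of $N$ and the classical vanishing for amenable groups. Your treatment is in fact more explicit than the paper's, spelling out the q-$\psi$-normality checks and the passage to finitely generated subgroups in (3).
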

\begin{proof}
(1): follows immediately from Lemma \ref{lemHa}. 

(2): The assumptions imply that $A$ is q-$\psi$-normal in $G = A*_C B$ and the assertion follows from Corollary~\ref{cor:q-psi-normal}.

(3): The infinite normal amenable subgroup $N$ is q-$\psi$-normal in $G$ and $b_1^\psi(N) = b_1^{(2)}(N) = 0$ by~\cite[Thm.~0.2]{CheegerGromov}.
\end{proof}

We illustrate the helpfulness of q-normality with an application to right-angled groups. This notion was put forward in \cite[Definition 1]{AGN17}.
\begin{definition} [right-angled groups]
A group $G$ is \emph{right-angled}, if it is the quotient
of a right-angled Artin group $A_{\Gamma}$ with a finite connected graph $\Gamma = (\mathcal{I},\mathcal{E})$ such that the image of every generator 
$\sigma_{i}$ $(i \in \mathcal{I})$ has infinite order in $G$.

The image of the generating set of $A_{\Gamma}$ will be called a \emph{right-angled set of generators}.
\end{definition}

\begin{theorem}\label{thm:right-angled-vanish}
Let $G$ be a right-angled group and let $\mathcal{S} = \{s_{i} \mid i \in \mathcal{I}\}$ be a right-angled set of generators. If $\psi \in \Ch(G)$ is such that the cyclic subgroup $\langle s_i \rangle$ is $\psi$-regular for every $i\in \mathcal{I}$. Then we have $b_{1}^{\psi}(G)=0$.
\end{theorem}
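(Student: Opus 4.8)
The plan is to realize $G$ as a group built from its cyclic subgroups $\langle s_i\rangle$ via iterated q-$\psi$-normal extensions, and then invoke Corollary~\ref{cor:q-psi-normal} together with the fact that a single $\psi$-regular infinite cyclic subgroup $\mathbb{Z}$ has $\bar b_1^\psi(\mathbb{Z}) = b_1^{(2)}(\mathbb{Z}) = 0$. The key geometric input is the defining graph $\Gamma$ being \emph{connected}: if $s_i$ and $s_j$ are adjacent in $\Gamma$, then they commute in $G$, so $\langle s_i, s_j\rangle$ contains $\langle s_i \rangle$ as a subgroup with $s_j \langle s_i\rangle s_j^{-1} = \langle s_i\rangle$; since $\langle s_i\rangle$ is infinite and $\psi$-regular by hypothesis, $\langle s_i\rangle$ is a q-$\psi$-normal subgroup of $\langle s_i, s_j\rangle$ (with $A = \{s_j\}$).

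Concretely, I would order the vertex set $\mathcal{I} = \{1, \dots, n\}$ (after first reducing to the finitely generated case; here $\mathcal{I}$ is already finite). Build an increasing chain $H_0 \leq H_1 \leq \dots \leq H_n = G$ where $H_k = \langle s_1, \dots, s_k\rangle$, choosing the ordering so that $\Gamma$ restricted to $\{1,\dots,k\}$ is connected for each $k$ — this is possible exactly because $\Gamma$ is connected (take any spanning tree and order by a traversal). Then at each step $H_k = \langle H_{k-1}, s_k\rangle$, and since the vertex $k$ has a neighbour $j < k$ in $\Gamma$, we have $s_j \in H_{k-1}$ with $s_k s_j s_k^{-1} = s_j$, hence $\langle s_j\rangle \leq H_{k-1} \cap s_k H_{k-1} s_k^{-1}$; this is an infinite $\psi$-regular subgroup. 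Therefore $H_{k-1}$ is q-$\psi$-normal in $H_k$ (with $A = \{s_k\}$), and by Corollary~\ref{cor:q-psi-normal} (or directly Lemma~\ref{lemHa} with $n = \infty$), $\bar b_1^\psi(H_k) \leq \bar b_1^\psi(H_{k-1})$. Starting from $H_1 = \langle s_1 \rangle \cong \mathbb{Z}$, which is $\psi$-regular, so that $b_1^\psi(H_1) = b_1^{(2)}(\mathbb{Z}) = 0$, induction gives $\bar b_1^\psi(G) = \bar b_1^\psi(H_n) \leq 0$, hence $b_1^\psi(G) = 0$ as $G$ is finitely generated (Remark~\ref{rem:bar}) and $\psi$-Betti numbers are non-negative.

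One subtlety to handle carefully: the hypothesis only guarantees $\langle s_i\rangle \cong \mathbb{Z}$ for the \emph{given} generator $s_i$; I must check that $H_1 = \langle s_1\rangle$ is $\psi$-regular, which is exactly the hypothesis, and that the cyclic group $\mathbb{Z}$ with its regular character indeed has vanishing first $\ell^2$-Betti number, which is classical ($b_1^{(2)}(\mathbb{Z}) = 0$ since $\mathbb{Z}$ is infinite amenable, or directly from Lemma~\ref{lem:main-lemma}). A second point: when $\Gamma$ has only one vertex, $G = \langle s_1\rangle$ is itself $\psi$-regular infinite cyclic and the claim is immediate, so I may assume $n \geq 2$.

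The main obstacle I anticipate is verifying the ordering claim — that the connectedness of $\Gamma$ lets us enumerate $\mathcal{I}$ so that every initial segment induces a connected subgraph — and making sure the q-$\psi$-normality condition is genuinely met at each stage, i.e.\ that the chosen neighbour $j < k$ really sits inside $H_{k-1}$ and that $s_k$ commuting with $s_j$ in $A_\Gamma$ survives in the quotient $G$ (it does, since commutation is a relation imposed in $A_\Gamma$ and hence holds in every quotient). Everything else is a routine application of the already-established Lemma~\ref{lemHa} and Corollary~\ref{cor:q-psi-normal}, so the proof should be short.
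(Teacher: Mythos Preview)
Your proposal is correct and follows essentially the same approach as the paper: induction on the number of generators, using at each step that the newly added generator commutes with a previous one so that Lemma~\ref{lemHa} (with $n=\infty$) applies, anchored at $b_1^\psi(\langle s_1\rangle) = b_1^{(2)}(\mathbb{Z}) = 0$. You are in fact slightly more explicit than the paper about why the enumeration of $\mathcal{I}$ can be chosen so that every initial segment induces a connected subgraph of $\Gamma$, a point the paper absorbs into a ``w.l.o.g.''
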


\begin{proof}
Our proof will be by induction over the number $n = |\mathcal{I}| \in \mathbb{N}$ of generators. For the base of induction we note that $b_1^{(2)}(\mathbb{Z}) = 0$. We assume for the induction step w.\,l.\,o.\,g. $\mathcal{S}=\{s_{1},s_{2},\dots,s_{n},s_{n+1}\}$ such that $s_{n}$ commutes with $s_{n+1}$ and such that $G' = \langle s_1 \dots, s_n \rangle$ is a right-angled group with $b_{1}^{(2)}(G')=0$. We claim that $G'$ is q-$\psi$-normal. Indeed, set $H= \langle s_{n} \rangle$ and $a=s_{n+1}$,  $a^{-1}G'a \cap G' \supseteq H$ and $H$ is $\psi$-regular by assumption. Now the result follows from Lemma~\ref{lemHa}.
\end{proof}
Using this calculation and the approximation methods from \cite{Kio2018} 
one can control the growth of Betti numbers in right-angled Artin groups with respect to normal chains with non-trivial intersection.
\begin{corollary}
Let $A_\Gamma$ be a right-angled Artin group for a finite connected graph $\Gamma$ with generating set $\{\sigma_i \mid i \in \mathcal{I}\}$. Let $N_1 \trianglerighteq N_2 \trianglerighteq \dots$ be a descending chain of finite index normal subgroups in $A_\Gamma$. If the order $\mathrm{ord}_{A_\Gamma/N_n}(\sigma_i)$ in the finite factors $A_\Gamma/N_n$ is unbounded for each generator $\sigma_i$, then
\[
	\lim_{n \to \infty} \frac{b_1(N_n)}{|A_\Gamma:N_n|} = 0.
\]
\end{corollary}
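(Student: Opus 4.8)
The plan is to identify the quantity $b_1(N_n)/|A_\Gamma:N_n|$ with a $\psi$-Betti number of $A_\Gamma$ and then to invoke Theorem~\ref{thm:right-angled-vanish}. Set $N_\infty := \bigcap_{n\ge 1} N_n$, which is a normal subgroup of $A_\Gamma$, and let $\psi \in \Ch(A_\Gamma)$ be the pullback along the quotient map $A_\Gamma \to A_\Gamma/N_\infty$ of the regular character of the (possibly infinite) group $A_\Gamma/N_\infty$; explicitly $\psi(g) = 1$ if $g \in N_\infty$ and $\psi(g) = 0$ otherwise. This is indeed a character, being a composition of a homomorphism with a function of positive type. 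Likewise each finite quotient $A_\Gamma/N_n$ yields a character $\psi_n \in \Ch(A_\Gamma)$, with $\psi_n(g) = 1$ for $g \in N_n$ and $0$ otherwise; since the chain $N_n$ is descending, $\psi_n \to \psi$ pointwise. Moreover $\ell^{\psi_n}(A_\Gamma) \cong \bbC[A_\Gamma/N_n]$ and the normalized $\psi_n$-dimension of a subspace is its ordinary dimension divided by $|A_\Gamma:N_n|$, so Shapiro's lemma gives $b_1^{\psi_n}(A_\Gamma) = b_1(N_n)/|A_\Gamma:N_n|$.

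First I would feed this into the approximation results of \cite{Kio2018}: $A_\Gamma$ is of type $F$ (it acts freely and cocompactly on the universal cover of its Salvetti complex), the $\psi_n$ arise from finite quotients and converge to $\psi$, so the approximation theorem yields
\[
  \lim_{n\to\infty} \frac{b_1(N_n)}{|A_\Gamma:N_n|} = \lim_{n\to\infty} b_1^{\psi_n}(A_\Gamma) = b_1^{\psi}(A_\Gamma);
\]
in fact only the upper bound $\limsup_n b_1^{\psi_n}(A_\Gamma) \le b_1^\psi(A_\Gamma)$ is needed, as all quantities involved are nonnegative. It thus suffices to prove $b_1^\psi(A_\Gamma) = 0$.

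This is exactly the content of Theorem~\ref{thm:right-angled-vanish}, applied to $A_\Gamma$ with the standard generating set $\mathcal{S} = \{\sigma_i \mid i\in\mathcal{I}\}$: the group $A_\Gamma$ is right-angled, being the quotient of the right-angled Artin group on the finite connected graph $\Gamma$ by the trivial subgroup, with each $\sigma_i$ of infinite order, and $\mathcal{S}$ is a right-angled set of generators. To verify the hypothesis of that theorem I would check that $\langle \sigma_i \rangle$ is $\psi$-regular for every $i$, i.e.\ $\sigma_i^k \notin N_\infty$ for all $k \neq 0$. Because the $N_n$ decrease, the orders $\mathrm{ord}_{A_\Gamma/N_n}(\sigma_i)$ form a non-decreasing sequence whose limit is $+\infty$ by hypothesis; if some $\sigma_i^k$ with $k\neq 0$ lay in $N_\infty$, then $\sigma_i^k \in N_n$ for all $n$ and hence $\mathrm{ord}_{A_\Gamma/N_n}(\sigma_i) \le |k|$ for all $n$, a contradiction. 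So $\langle\sigma_i\rangle \cap N_\infty = \{1\}$, $\psi$ vanishes on $\langle\sigma_i\rangle\setminus\{1\}$, and Theorem~\ref{thm:right-angled-vanish} gives $b_1^\psi(A_\Gamma) = 0$. Combined with the displayed limit, this proves the corollary.

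The step I expect to be the main obstacle is the clean invocation of the approximation machinery of \cite{Kio2018}: one must match the sequence of finite-quotient characters $\psi_n$, together with the finiteness properties of $A_\Gamma$, to the exact hypotheses under which $b_1^{\psi_n} \to b_1^{\psi}$ holds, and in particular keep track of the fact that the limit character is the one attached to $\bigcap_n N_n$ --- which is precisely what replaces the $\ell^2$-setting by the genuinely non-regular character $\psi$ and makes Theorem~\ref{thm:right-angled-vanish} the relevant input. The remaining ingredients (the Shapiro-lemma identification of $b_1(N_n)/|A_\Gamma:N_n|$ with $b_1^{\psi_n}(A_\Gamma)$, and the $\psi$-regularity check) are routine.
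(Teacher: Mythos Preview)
Your proposal is correct and follows essentially the same route as the paper: the paper's ``character of the permutation action of $A_\Gamma$ on $A_\Gamma/N_n$'' is exactly your indicator function of $N_n$, and the limit character is, as you compute, the indicator of $\bigcap_n N_n$; the paper then invokes \cite[Theorem~3.5]{Kio2018} (sofic $\psi_n$, finitely presented $A_\Gamma$) for the approximation step and finishes with Theorem~\ref{thm:right-angled-vanish} just as you do. Your anticipated obstacle is thus resolved by that specific citation, and the only cosmetic difference is that the paper does not name $N_\infty$ explicitly.
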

\begin{proof}
Let $\psi_n$ be the character of the permutation action of $A_\Gamma$ on $A_\Gamma/N_n$. Since the sequence of normal subgroups is descending, the sequence $\psi_n$ converges 
in $\Ch(A_\Gamma)$ to a character $\psi$. 
Since each character $\psi_n$ is sofic and $A_\Gamma$ is finitely presented, it follows from \cite[Theorem 3.5]{Kio2018} that
\[
	b_1^\psi(A_\Gamma) = \lim_{n \to \infty} b^{\psi_n}_1(A_\Gamma) = \lim_{n \to \infty} \frac{b_1(N_n)}{|A_\Gamma:N_n|}
\]
If the order $\mathrm{ord}_{A_\Gamma/N_n}(\sigma_i)$ tends to infinity, $\psi_n(\sigma_i^k)$ vanishes for all $k \neq 0 $ and all large $n$, i.e., $\langle \sigma_i \rangle$ is a $\psi$-regular subgroup.
Theorem \ref{thm:right-angled-vanish} implies that $b_1^\psi(A_\Gamma) = 0$ and this completes the proof.
\end{proof}

\bibliography{Literatur}
\bibliographystyle{abbrv}

\end{document}